\newtheorem{lem}{\bf Lemma}
\newtheorem{prop}{\bf Proposition}
\newtheorem{thm}{\bf Theorem}
\newtheorem{corr}{\bf Corollary}
\newtheorem{ex}{\bf Example}
\newtheorem{rem}{\bf Remark}
\newcommand{\E}{\mbox{\bf E}}
\newcommand{\ind}{{\bf 1}}
\begin{document}

\title[On the Ruin Problem]{On the Ruin Problem with Investment when the Risky Asset is a Semimartingale}
\maketitle

\begin{center}
{\large  J. Spielmann, LAREMA, D\'epartement de
Math\'ematiques, Universit\'e d'Angers, 2, Bd Lavoisier  49045, \sc Angers Cedex 01}\vspace{0.5cm}

{\large  L. Vostrikova, LAREMA, D\'epartement de
Math\'ematiques, Universit\'e d'Angers, 2, Bd Lavoisier  49045,
\sc Angers Cedex 01}
\end{center} 
\vspace{0.2in}

\begin{abstract}
In this paper, we study the ruin problem with investment in a general framework where the business part $X$ is a L\'{e}vy process and the return on investment $R$ is a semimartingale. We obtain upper bounds on the finite and infinite time ruin probabilities that decrease as a power function when the initial capital increases. When $R$ is a L\'{e}vy process, we retrieve the well-known results. Then, we show that these bounds are asymptotically optimal in the finite time case, under some simple conditions on the characteristics of $X$. Finally, we obtain a condition for ruin with probability one when $X$ is a Brownian motion with negative drift and express it explicitly using the characteristics of $R$.
\end{abstract}

\noindent MSC 2010 subject classifications: 91B30 (primary), 60G99, 65C30

\begin{section}{Introduction and Main Results}\label{s1}

\par The estimation of the probability of ruin of insurance companies is a fundamental problem for market actors. In his seminal paper \cite{Cr}, Cram\'{e}r used a compound Poisson process with drift to model the value of an insurance company and showed that, under some assumptions on the parameters of the process, the probability of ruin decreases at least as an exponential function of the initial capital. Over time, the compound Poisson process has been replaced by more complex models. In a first generalisation, the value of the company is modeled by a L\'{e}vy process and then the ruin probability behaves essentially like the tail of the L\'{e}vy measure and, in the light-tailed case, this means that this probability decreases at least as an exponential function (see \cite{A}, \cite{Kl}, \cite{Ky}, and \cite{Spiel}). To generalise even further, it can be assumed that insurance companies invest their capital in a financial market. The main question is then: how does the probability of ruin changes with this additional source of risk?

In this general setting, the value of an insurance company with initial capital $y > 0$, denoted by $Y = (Y_t)_{t \geq 0}$, is given as the solution of the following linear stochastic differential equation
\begin{equation}\label{eq1}
Y_t = y + X_t + \int_0^t Y_{s-}dR_s, \text{ for all } t \geq 0,
\end{equation}
\noindent where $X = (X_t)_{t \geq 0}$ and $R = (R_t)_{t \geq 0}$ are two independent one dimensional stochastic processes defined on some probability space $(\Omega, \mathcal{F}, {\bf P})$ and chosen so that (\ref{eq1}) makes sense. In risk theory, the process $X$ represents the profit and loss of the business activity and $R$ represents the return of the investment. The main problem then concerns the study of the stopping time defined by
$$\tau(y) = \inf\{t \geq 0 | Y_t < 0\}$$
with $\inf\{\emptyset\}=+\infty$ and the evaluation of the ruin probability before time $T>0$, namely ${\bf P}(\tau(y) \leq T)$, and the ultimate ruin probability ${\bf P}(\tau(y) < +\infty)$. The ruin problem in this general setting was first studied in \cite{P3}.

Before describing our set-up and our results, we give a brief review of the relevant litterature. The special case when $R_t = rt$, with $r > 0$, for all $t \geq 0$ (non-risky investment) is well-studied and we refer to \cite{P1} and references therein for the main results. In brief, in that case and under some additional conditions, the ruin probability decreases even faster than an exponential since the capital of the insurance company is constantly increasing.

The case of risky investment is also well-studied. In that case, it is assumed in general that $X$ and $R$ are independent L\'{e}vy processes. The first results in this setting appear in \cite{K} (and later in \cite{Yu}) where it was shown that under some conditions there exists $C > 0$ and $y_0 \geq 0$ such that for all $y \geq y_0$ and for some $b > 0$
$${\bf P}(\tau(y) < +\infty) \geq Cy^{-b}.$$
Qualitatively, this means that the ruin probability cannot decrease faster as a power function, i.e. the degrowth is much slower than in the no-investment case. Later, under some conditions on the L\'{e}vy triplets of $X$ and $R$, it was shown in \cite{P} that for some $\beta>0$ and $\epsilon > 0$, there exists $C > 0$ such that, as $y \to \infty$,
$$y^{\beta}\,{\bf P}(\tau(y) < +\infty) = C + o(y^{-\epsilon}).$$
Recently, in \cite{KP1}, it is proven, under different assumptions on the L\'{e}vy triplets and when $X$ has no negative jumps, that there exists $C > 0$ such that  for the above $\beta>0$
$$\lim_{y \to \infty}y^{\beta}\,{\bf P}(\tau(y) < +\infty) = C.$$
Results concerning bounds on ${\bf P}(\tau(y) < +\infty)$ are given in \cite{K} where it is shown that, for all $\epsilon > 0$, there exists  $C > 0$ such that for all $y \geq 0$ and the same $\beta>0$
$${\bf P}(\tau(y) < +\infty) \leq C y^{-\beta + \epsilon}.$$ 

In less general settings similar results are available. The case when $X$ is a compound Poisson process with drift and exponential jumps and $R$ is a Brownian motion with drift is studied in \cite{F} (negative jumps only) and in \cite{KP2} (positive jumps only). In \cite{PZ} the model with negative jumps is generalized to the case where the drift of $X$ is a bounded stochastic process. 

Finally, some exact results for the ultimate ruin probability are available in specific models (see e.g. \cite{P1}, \cite{Yu}) and conditions for ruin with probability one are given, for different levels of generality, in \cite{F}, \cite{KP1}, \cite{KP2}, \cite{K}, \cite{P2} and \cite{PZ}.

The goal of this paper is to contribute to the study of the ruin problem by extending some results to the case when $R$ is a semimartingale and by obtaining similar results for the finite-time ruin probability in this general set-up. Thus, in the following we suppose that the processes $X = (X_t)_{t\geq 0}$ and $R=(R_t)_{t\geq 0}$ are independent one-dimensional processes both starting from zero, and such that $X$ is a L\'{e}vy process and $R$ is a semimartingale. We suppose additionally that the jumps of $R$ denoted $\Delta R_t = R_t - R_{t-}$ are strictly bigger than $-1$, for all $t > 0$. 

\par We denote the generating triplet of the L\'{e}vy process $X$ by $(a_X,\sigma^2_X,\nu_X)$ where $a_X\in \mathbb{R}$, $\sigma_X\geq 0$ and $\nu _X$ is a L\'{e}vy measure. We recall that the generating triplet characterizes the law of $X$ via the characteristic function $\phi_X$ of $X_t$ (see e.g. p.37 in \cite{Sa}):
\begin{equation*}\label{eq0}
\phi_X(\lambda ) = \exp\left(t \left(i\lambda a_X - \frac{\sigma ^2_X\lambda ^2 }{2} +
 \int_{\mathbb{R}}(e^{i\lambda x}-1-i\lambda x {\bf 1}_{\{|x| \leq 1 \}})\,\nu_X(dx)\right)\right)
\end{equation*}
where the Lévy measure $\nu_X$ satisfies
\begin{equation*}\label{c_0}
\int_{\mathbb{R}} \min(x^2, 1)\,\nu_X(dx)  < \infty.
\end{equation*}
As well-known, the process $X$ can then be written in the form:
\begin{equation}\label{decx}
\begin{split}
X_t= a_Xt & +\sigma_XW_t + \int_0^t\int_{|x|\leq 1}x(\mu_X(ds,dx)-\nu_X(dx)ds) \\
& + \int_0^t\int_{|x| > 1}x\mu_X(ds,dx),
\end{split}
\end{equation}
where $\mu_X$ is the measure of jumps of $X$ and $W$ is standard Brownian Motion.

We recall that  a semimartingale $R = (R_t)_{t\geq 0}$ can be also defined by its semimartingale decomposition, namely
\begin{equation}\label{eq_semdeco}
\begin{split}
R_t=B_t & + R^c_t + \int_0^t\int_{|x| \leq 1}x(\mu_R(ds,dx)-\nu_R(ds, dx)) \\
& + \int_0^t\int_{|x| > 1}x\mu_R(ds,dx),
\end{split}
\end{equation}
where $B=(B_t)_{t\geq 0}$ is a drift part, $R^c = (R^c_t)_{t\geq 0}$ is the continuous martingale part of $R$, $\mu_R$ is the measure of jumps of $R$ and $\nu_R$ is its compensator (see e.g. Chapter 2 of  \cite{JSh} for more information about these notions).

As well-known the equation \eqref{eq1} has a unique strong solution (see e.g. Theorem 11.3 in \cite{paulec}): for $t>0$
\begin{equation}\label{eq2}
Y_t= \mathcal{E}(R)_t\left(y+\int_0^t\frac{dX_s}{\mathcal{E}(R)_{s-}}\right)
\end{equation}
where $\mathcal{E}(R)$ is Doléans-Dade's exponential,
$$\mathcal{E}(R)_t= \exp\left(R_t-\frac{1}{2}\langle R^c\rangle _t\right)\prod_{0<s\leq t}(1+\Delta R_s)e^{-\Delta R_s}$$
(for more details about Doléans-Dade's exponential see e.g. Ch.1, §4f, p. 58 in \cite{JSh}).
Then the time of ruin is simply
\begin{equation}\label{eq3}
\tau (y)= \inf \left\{ t\geq 0 \left| \int_0^t\frac{dX_s}{\mathcal{E}(R)_{s-}}<-y\right.\right\}
\end{equation}
because $\mathcal{E}(R)_t >0$, for all $t\geq 0$, and this last fact follows from the assumption that $\Delta R_t > -1$, for all $t \geq 0$.

In this paper, we show that the behaviour of $\tau(y)$ for finite horizon $T > 0$ depends strongly on the behaviour of the exponential functionals at $T$, i.e. on the behaviour of
$$I_T= \int_0^{T}e^{-\hat{R}_{s}}ds \,\,\, \mbox{and}\,\,\,J_{T}(\alpha)= \int_0^{T}e^{-\alpha\hat{R}_{s}}ds$$
where $\alpha>0$ and  $\hat{R}_t = \ln \mathcal{E}(R)_t$, for all $t \geq 0$,
and for infinite horizon on the behaviour of
$$I_{\infty}= \int_0^{\infty}e^{-\hat{R}_{s}}ds \,\,\, \mbox{and}\,\,\,J_{\infty}(\alpha)= \int_0^{\infty}e^{-\alpha\hat{R}_{s}}ds.$$
For convenience we denote $J_T = J_T(2)$ and $J_{\infty} = J_{\infty}(2)$. More precisely, defining
$$\beta_T = \sup\left\{\beta \geq 0 : \E(J_T^{\beta/2}) < \infty, \E(J_T(\beta)) < \infty\right\},$$
we prove the following theorem.
\begin{thm}\label{t1}
Let $T > 0$. Assume that $\beta_T > 0$ and that, for some $0 < \alpha < \beta_T$, we have
\begin{equation}\label{intcond}
\int_{|x| > 1}|x|^{\alpha}\nu_X(dx) < \infty.
\end{equation}
Then, for all $y > 0$,
$${\bf P}(\tau(y) \leq T) \leq \frac{C_1\E(I_T^{\alpha}) + C_2\E(J_T^{\alpha/2}) + C_3 \E(J_T(\alpha))}{y^{\alpha}},$$
where the expectations on the right hand side are finite and $C_1 \geq 0$, $C_2 \geq 0$, and $C_3 \geq 0$ are constants that depend only on $\alpha$ in an explicit way.
\end{thm}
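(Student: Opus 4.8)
The plan is to reduce the statement, via Markov's inequality, to a moment bound for the running supremum of the ``intrinsic'' process $Z_t = \int_0^t e^{-\hat R_{s-}}\,dX_s$, where $\hat R_t = \ln\mathcal E(R)_t$, so that $\mathcal E(R)_{s-} = e^{\hat R_{s-}}$. From \eqref{eq3} one has $\{\tau(y)\le T\}\subseteq\{\inf_{t\le T}Z_t<-y\}\subseteq\{\sup_{t\le T}|Z_t|>y\}$, whence
\[
{\bf P}(\tau(y)\le T)\ \le\ y^{-\alpha}\,{\bf E}\big[\textstyle\sup_{t\le T}|Z_t|^\alpha\big],
\]
and it remains to bound the right-hand side by $C_1{\bf E}(I_T^\alpha)+C_2{\bf E}(J_T^{\alpha/2})+C_3{\bf E}(J_T(\alpha))$.

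Since $X$ and $R$ are independent, I would argue conditionally on $R$ (a regular conditional version turns $\hat R$ into a fixed c\`adl\`ag path, hence bounded on $[0,T]$, so that $I_T$, $J_T$, $J_T(\alpha)$ are a.s.\ finite, while $X$ remains a L\'evy process with triplet $(a_X,\sigma^2_X,\nu_X)$; the passage back to the full expectation is a Fubini-type argument). Using \eqref{decx}, write $Z=Z^{(1)}+Z^{(2)}+Z^{(3)}+Z^{(4)}$, where the four terms correspond to the drift $a_X\int_0^\cdot e^{-\hat R_{s-}}ds$, the Brownian part $\sigma_X\int_0^\cdot e^{-\hat R_{s-}}dW_s$, the compensated small-jump martingale $\int_0^\cdot\int_{|x|\le1}e^{-\hat R_{s-}}x(\mu_X-\nu_X)(ds,dx)$, and the large-jump term $\int_0^\cdot\int_{|x|>1}e^{-\hat R_{s-}}x\,\mu_X(ds,dx)$. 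By sub-/super-additivity of $u\mapsto u^\alpha$, $\sup_{t\le T}|Z_t|^\alpha\le (1\vee 4^{\alpha-1})\sum_{i=1}^4\sup_{t\le T}|Z^{(i)}_t|^\alpha$, so it suffices to treat each $Z^{(i)}$ separately, conditionally on $R$.

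The drift term is pathwise: $\sup_{t\le T}|Z^{(1)}_t|^\alpha\le|a_X|^\alpha I_T^\alpha$. For $Z^{(2)}$, Burkholder--Davis--Gundy (applied conditionally, with explicit constant) gives ${\bf E}[\sup_{t\le T}|Z^{(2)}_t|^\alpha\mid R]\le C\,\sigma_X^\alpha J_T^{\alpha/2}$, since $\langle Z^{(2)}\rangle_T=\sigma_X^2\int_0^Te^{-2\hat R_s}ds=\sigma_X^2 J_T$. For $Z^{(3)}$ with $0<\alpha\le2$, conditional Jensen together with Doob's $L^2$-inequality and the It\^o isometry give ${\bf E}[\sup_{t\le T}|Z^{(3)}_t|^\alpha\mid R]\le (4\int_{|x|\le1}x^2\nu_X(dx))^{\alpha/2}J_T^{\alpha/2}$; for $\alpha>2$ one has $\int_{|x|\le1}|x|^\alpha\nu_X(dx)<\infty$ and a Kunita-type inequality yields a bound of the form $C[(\int_{|x|\le1}x^2\nu_X)^{\alpha/2}J_T^{\alpha/2}+(\int_{|x|\le1}|x|^\alpha\nu_X)J_T(\alpha)]$. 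For $Z^{(4)}$, if $\alpha\ge1$ then $\int_{|x|>1}|x|\nu_X(dx)<\infty$, so one may compensate, $Z^{(4)}=\widetilde M+(\int_{|x|>1}x\nu_X(dx))\int_0^\cdot e^{-\hat R_{s-}}ds$; the second term is $\le|\int_{|x|>1}x\nu_X|^\alpha I_T^\alpha$ pathwise, while $\widetilde M$ is controlled by BDG together with $(\sum(\Delta\widetilde M_s)^2)^{\alpha/2}\le\sum|\Delta\widetilde M_s|^\alpha$ when $1\le\alpha\le2$ (resp.\ by a Kunita-type inequality when $\alpha>2$), giving a multiple of $J_T(\alpha)$ (and, for $\alpha>2$, also of $J_T^{\alpha/2}$, using $\int_{|x|>1}x^2\nu_X<\infty$). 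If $0<\alpha<1$, compensation need not be available, but subadditivity of $u\mapsto u^\alpha$ applies directly to the pathwise-finite sum of large jumps: $\sup_{t\le T}|Z^{(4)}_t|^\alpha\le\int_0^T\int_{|x|>1}e^{-\alpha\hat R_{s-}}|x|^\alpha\mu_X(ds,dx)$, with conditional expectation $(\int_{|x|>1}|x|^\alpha\nu_X(dx))J_T(\alpha)$.

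Adding these bounds and taking the expectation over $R$ gives the asserted inequality, with $C_1,C_2,C_3$ explicit in $\alpha$ (through the BDG and Kunita constants) and in the characteristics of $X$; finiteness of the three expectations follows from $\alpha<\beta_T$, since ${\bf E}(J_T^{\alpha/2})$ and ${\bf E}(J_T(\alpha))$ are finite by the definition of $\beta_T$ and an elementary domination, while $I_T^\alpha\le T^{\alpha/2}J_T^{\alpha/2}$ by Cauchy--Schwarz when $\alpha\le2$ and $I_T^\alpha\le T^{\alpha-1}J_T(\alpha)$ by Jensen when $\alpha\ge1$. I expect the main obstacle to be the regime $0<\alpha<2$, and especially $0<\alpha<1$: there $\int_{|x|\le1}|x|^\alpha\nu_X(dx)$ may be infinite and, when $\alpha<1$, the large jumps cannot be compensated into an $L^1$-martingale. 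The resolution is precisely what the statement records --- keep the quadratic-variation ($L^2$) structure of the small-jump martingale (which is why the exponent $\alpha/2$ and $J_T=J_T(2)$ appear rather than $J_T(\alpha)$) and treat the large jumps pathwise by subadditivity; the remaining work (explicit constants, the interpolation inequalities among $I_T$, $J_T$, $J_T(\alpha)$, and justifying the conditioning) is routine bookkeeping.
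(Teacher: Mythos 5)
Your proposal is correct and follows essentially the same route as the paper: Markov's inequality, the L\'evy--It\^o decomposition of $\int_0^t dX_s/\mathcal E(R)_{s-}$ into drift, Brownian, compensated small-jump and large-jump parts, conditioning on $R$ (which the paper formalizes via Riemann-sum approximation in its Proposition on the identity in law), and then exactly the bounds $I_T^\alpha$, $J_T^{\alpha/2}$, $J_T(\alpha)$ for the respective pieces, with the large jumps handled by subadditivity when $\alpha\le 1$. The only differences are cosmetic choices of maximal inequality (Doob--Jensen/BDG and a Kunita-type bound in place of the reflection principle and Novikov's inequalities, which are the same estimates) and compensation of the large jumps for $\alpha\ge 1$ where the paper uses H\"older's inequality.
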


This theorem links the ruin probability with the tails of the L\'{e}vy measure of $X$ and the exponential functionals of the process $R$ which are well-studied objects. It also gives the first results for the case when $R$ belongs to the class of semimartingales, and the case when $R$ is a L\'{e}vy process is recovered as a special case. This could be used to study the ruin probabilities when the asset has stochastic volatility or when the investment is in a risk-free asset with a stochastic interest rate. Theorem \ref{t1} is also, up to our knowledge, the first result, when $R$ is not deterministic, for the ruin before a finite time for processes given by equations of the form (\ref{eq1}) even in the case when $R$ is a L\'{e}vy process.

From Theorem \ref{t1}, we can easily obtain a similar results for the ultimate ruin probability. Define
$$\beta_{\infty} = \sup\left\{\beta \geq 0 : \E(I_{\infty}^{\beta}) < \infty, \E(J_{\infty}^{\beta/2}) < \infty, \E(J_{\infty}(\beta)) < \infty\right\}.$$
Then, since $(I_t)_{t \geq 0}$, $(J_t)_{t \geq 0}$ and $(J_t(\alpha))_{t \geq 0}$ are increasing, we obtain, letting $T \to \infty$ and using the monotone convergence theorem with the upper bound of Theorem \ref{t1}, the following corollary.

\begin{corr}\label{c2}
Assume that $\beta_{\infty} > 0$ and that (\ref{intcond}) holds for some $0 < \alpha < \beta_{\infty}$, then
$${\bf P}(\tau(y) < \infty) \leq \frac{C_1\E(I_{\infty}^{\alpha}) + C_2\E(J_{\infty}^{\alpha/2}) + C_3 \E(J_{\infty}(\alpha))}{y^{\alpha}},$$
where $C_1 \geq 0$, $C_2 \geq 0$, and $C_3 \geq 0$ are constants that depend only on $\alpha$ in an explicit way.
\end{corr}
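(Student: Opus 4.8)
The plan is to obtain the corollary as a direct consequence of Theorem \ref{t1} via a monotone passage to the limit in the horizon $T$. Fix $0 < \alpha < \beta_{\infty}$ as in the statement and fix $y > 0$. The first step is to verify that, for every finite $T > 0$, Theorem \ref{t1} may be applied with this same $\alpha$. Since the integrands $e^{-\hat{R}_s}$ and $e^{-\beta\hat{R}_s}$ are nonnegative and $[0,T] \subseteq [0,\infty)$, one has $J_T^{\beta/2} \leq J_{\infty}^{\beta/2}$ and $J_T(\beta) \leq J_{\infty}(\beta)$ pointwise, whence $\E(J_T^{\beta/2}) \leq \E(J_{\infty}^{\beta/2})$ and $\E(J_T(\beta)) \leq \E(J_{\infty}(\beta))$ for every $\beta \geq 0$. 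Consequently every $\beta$ admissible in the definition of $\beta_{\infty}$ is admissible in the definition of $\beta_T$, so $\beta_T \geq \beta_{\infty} > 0$, and in particular $0 < \alpha < \beta_T$. As \eqref{intcond} is assumed, Theorem \ref{t1} gives, for this $T$,
$${\bf P}(\tau(y) \leq T) \leq \frac{C_1\E(I_T^{\alpha}) + C_2\E(J_T^{\alpha/2}) + C_3\E(J_T(\alpha))}{y^{\alpha}},$$
with finite right-hand side and with the very same constants $C_1, C_2, C_3$ as in the theorem.

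The second step is to let $T \to \infty$. The events $\{\tau(y) \leq T\}$ increase with $T$ and their union over $T > 0$ is $\{\tau(y) < \infty\}$, so by continuity of ${\bf P}$ along increasing sequences ${\bf P}(\tau(y) \leq T) \uparrow {\bf P}(\tau(y) < \infty)$. On the other side, since the integrands are nonnegative the processes $(I_t)_{t \geq 0}$, $(J_t)_{t \geq 0}$ and $(J_t(\alpha))_{t \geq 0}$ are nondecreasing in $t$ and converge pointwise, as $t \to \infty$, to $I_{\infty}$, $J_{\infty}$ and $J_{\infty}(\alpha)$; applying the continuous nondecreasing maps $u \mapsto u^{\alpha}$ and $u \mapsto u^{\alpha/2}$ and then the monotone convergence theorem yields $\E(I_T^{\alpha}) \uparrow \E(I_{\infty}^{\alpha})$, $\E(J_T^{\alpha/2}) \uparrow \E(J_{\infty}^{\alpha/2})$ and $\E(J_T(\alpha)) \uparrow \E(J_{\infty}(\alpha))$, the three limits being finite because $\alpha < \beta_{\infty}$ forces all three infinite-horizon expectations to be finite by the definition of $\beta_{\infty}$. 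Passing to the limit in the displayed inequality, whose right-hand side is nondecreasing in $T$, gives
$${\bf P}(\tau(y) < \infty) \leq \frac{C_1\E(I_{\infty}^{\alpha}) + C_2\E(J_{\infty}^{\alpha/2}) + C_3\E(J_{\infty}(\alpha))}{y^{\alpha}},$$
and since $y > 0$ was arbitrary this holds for all $y > 0$, which is the claim.

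I do not expect any genuine obstacle: everything reduces to the monotone convergence theorem and continuity of the probability measure. The only point that deserves an explicit check is the inequality $\beta_T \geq \beta_{\infty}$, which is what guarantees that the fixed exponent $\alpha$ stays below $\beta_T$ for every finite horizon and hence that Theorem \ref{t1} is legitimately applicable at each $T$; this is dispatched exactly via the pointwise domination $J_T(\cdot) \leq J_{\infty}(\cdot)$ and $J_T \leq J_{\infty}$ noted above.
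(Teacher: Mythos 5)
Your proof is correct and follows exactly the route the paper itself takes (the paper only sketches it in the sentence preceding the corollary): apply Theorem \ref{t1} for each finite horizon $T$, note that the monotonicity $J_T \leq J_{\infty}$ and $J_T(\beta) \leq J_{\infty}(\beta)$ gives $\beta_T \geq \beta_{\infty}$ so the theorem is applicable with the fixed $\alpha$, and then let $T \to \infty$ using monotone convergence on both sides. Your explicit verification that $\beta_T \geq \beta_{\infty}$ is a detail the paper leaves implicit, but it is the same argument.
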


We can show, when $\beta_T \geq 1$ and under some simple conditions on the L\'evy triplet of $X$, that the bound in Theorem \ref{t1} is asymptotically optimal in a sense given below.

\begin{thm}\label{t2}
Let $T > 0$. Assume that $1 \leq \beta_T < \infty$ and that $\E(I_T^{\beta_T}) = +\infty$. Additionally, assume that 
\begin{equation*}
\int_{|x| > 1}|x| \nu_X(dx) < +\infty
\end{equation*}
and that 
\begin{equation}\label{eq_safety}
a_X + \int_{|x| > 1}x\nu_X(dx) < 0 \text{ or } \sigma_X > 0.
\end{equation}
Then, for all $\delta > 0$, there exists a positive numerical sequence $(y_n)_{n \in \mathbb{N}}$ increasing to $+\infty$ such that, for all $C > 0$, there exists $n_0 \in \mathbb{N}$ such that for all $n \geq n_0$,
$${\bf P}(\tau(y_n) \leq T) \geq \frac{C}{y_n^{\beta_T}\ln(y_n)^{1+\delta}}.$$

Moreover, if (\ref{intcond}) is satisfied for all $\alpha < \beta_T$, then,
$$\limsup_{y \to \infty}\frac{\ln\left({\bf P}(\tau(y) \leq T)\right)}{\ln(y)} = -\beta_T.$$
\end{thm}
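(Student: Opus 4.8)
The plan is to obtain the lower bound on ${\bf P}(\tau(y)\le T)$ by conditioning on the path of $R$ and analyzing the conditional law of the stochastic integral $Z_t=\int_0^t \mathcal{E}(R)_{s-}^{-1}dX_s$. Conditionally on $\mathcal F^R$, the process $X$ is still the same Lévy process (by independence), so $Z$ is, conditionally, a time-changed/weighted integral of a Lévy process; in particular $Z_T$ has conditional mean $(a_X+\int_{|x|>1}x\,\nu_X(dx))\,I_T$ (finite by the first moment assumption) and conditional variance bounded below by $\sigma_X^2 I_T$ (plus the jump-variance contribution), where $I_T=\int_0^T e^{-\hat R_s}ds$. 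The safety-loading-type condition \eqref{eq_safety} guarantees that, conditionally, $Z_T$ has either strictly negative mean proportional to $I_T$ or a Gaussian component of conditional variance $\sigma_X^2 I_T$. In either case one gets a lower bound of the form ${\bf P}(Z_T<-y\mid \mathcal F^R)\ge g(y/I_T)$ for an explicit decreasing function $g$ (a one-sided Chebyshev/Paley--Zygmund bound in the negative-drift case, a Gaussian-tail lower bound in the $\sigma_X>0$ case), hence ${\bf P}(\tau(y)\le T)\ge {\bf P}(Z_T<-y)\ge \E[g(y/I_T)]$.

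Next I would exploit the hypothesis $\E(I_T^{\beta_T})=+\infty$ together with $\beta_T<\infty$ to extract the sequence $(y_n)$. Since $x\mapsto x^{\beta_T}$ has infinite expectation against the law of $I_T$, the tail ${\bf P}(I_T>u)$ cannot decay faster than $u^{-\beta_T}$ on average; more precisely, for every $\delta>0$ one has $\limsup_{u\to\infty} u^{\beta_T}\ln(u)^{1+\delta}\,{\bf P}(I_T>u)=+\infty$ (otherwise $\E(I_T^{\beta_T})$ would converge by an integral comparison, using $\int^\infty \frac{du}{u\ln(u)^{1+\delta}}<\infty$). This yields a sequence $u_n\to\infty$ with ${\bf P}(I_T>u_n)\ge n\,u_n^{-\beta_T}\ln(u_n)^{-(1+\delta)}$; setting $y_n$ proportional to $u_n$ and using $\E[g(y_n/I_T)]\ge g(\text{const})\cdot{\bf P}(I_T>u_n)$ gives the claimed lower bound ${\bf P}(\tau(y_n)\le T)\ge C\,y_n^{-\beta_T}\ln(y_n)^{-(1+\delta)}$ for $n$ large, for any prescribed $C$.

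For the second assertion, combine the first part with Theorem \ref{t1}: if \eqref{intcond} holds for all $\alpha<\beta_T$, then for each such $\alpha$ we get ${\bf P}(\tau(y)\le T)\le C_\alpha y^{-\alpha}$, so $\limsup_{y\to\infty}\frac{\ln {\bf P}(\tau(y)\le T)}{\ln y}\le -\alpha$ for every $\alpha<\beta_T$, hence $\le-\beta_T$. Conversely, the sequential lower bound of the first part gives, along $(y_n)$, $\ln{\bf P}(\tau(y_n)\le T)\ge \ln C-\beta_T\ln y_n-(1+\delta)\ln\ln y_n$, so $\frac{\ln{\bf P}(\tau(y_n)\le T)}{\ln y_n}\to-\beta_T$ along this subsequence, forcing the $\limsup$ to be $\ge-\beta_T$. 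Together these give the equality.

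The main obstacle I expect is the conditional tail lower bound ${\bf P}(Z_T<-y\mid\mathcal F^R)\ge g(y/I_T)$ with a clean, $R$-path-independent function $g$: one must control the conditional law of $Z_T$ uniformly over $\mathcal F^R$, which in the $\sigma_X>0$ case is routine (Gaussian part with conditional variance $\sigma_X^2 I_T$, independent of everything else, so a direct Gaussian lower bound after bounding the rest), but in the negative-drift case requires a careful second-moment (Paley--Zygmund) argument showing the conditional second moment of $Z_T$ is controlled by a power of $I_T$ times constants depending only on the characteristics of $X$ — essentially the same moment estimates already used to prove Theorem \ref{t1}, now deployed to bound $\E[Z_T^2\mid\mathcal F^R]$ from above and $\E[Z_T\mid\mathcal F^R]$ exactly, so that ${\bf P}(Z_T<-y\mid\mathcal F^R)\ge c$ whenever $I_T$ exceeds a fixed multiple of $y$.
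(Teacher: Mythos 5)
Your overall architecture agrees with the paper on two of its three pillars: you extract the sequence $(y_n)$ from $\E(I_T^{\beta_T})=\infty$ by exactly the integral-comparison argument of the paper's Lemma \ref{lem_aysmZ}, and your derivation of the $\limsup$ identity from Theorem \ref{t1} plus the sequential lower bound is the paper's argument verbatim. The gap is in the central step, namely the claimed conditional bound ${\bf P}(Z_T<-y\mid\mathcal F^R)\ge g(y/I_T)$ with a path-independent decreasing $g$. Conditionally on $\mathcal E(R)=q$, the spread of the compensated jump part of $Z_T$ is governed by $J_T=\int_0^Tq_s^{-2}ds$ (the small-jump compensated integral has conditional $L^2$-norm $\bigl(\int_{|x|\le1}x^2\nu_X(dx)\bigr)^{1/2}\sqrt{J_T}$), and $J_T$ admits \emph{no upper bound} in terms of $I_T$: Cauchy--Schwarz only gives $J_T\ge I_T^2/T$, and a path $q$ with a deep narrow spike makes $J_T$ arbitrarily large while $I_T$ stays moderate. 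Hence in the case $\sigma_X=0$, $\delta_X<0$, the event $\{I_T>cy\}$ does not force ${\bf P}(Z_T<-y\mid\mathcal F^R)$ to be bounded below by a constant: a Cantelli/Paley--Zygmund bound needs the conditional second moment, which is of order $J_T$ and may even be infinite since you only assume $\int_{|x|>1}|x|\,\nu_X(dx)<\infty$; a median/Markov substitute again produces $\sqrt{J_T}$-terms, plus a large-jump first-moment term of order $I_T$ whose constant $\int_{|x|>1}|x|\nu_X(dx)$ need not be smaller than $|\delta_X|$. Reusing the moment estimates from the proof of Theorem \ref{t1} does not repair this, for the same reason. (In the case $\sigma_X>0$ your plan can be salvaged, precisely because there Cauchy--Schwarz works in the favorable direction: the Gaussian conditional variance is $\sigma_X^2J_T\ge\sigma_X^2I_T^2/T$ — note it is $J_T$, not $I_T$ as you wrote — and dominates both the drift and the fluctuation of the jump part.)

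The paper sidesteps the conditional tail bound entirely. It shows that the \emph{unconditional} moment $\E\bigl[(-\delta_XI_T-\sigma_XW_{J_T}-N^d_T)^{+,\beta_T}\bigr]$ is infinite: a Jensen-type lemma (Lemma \ref{lem_sumXY}, and this is exactly where the hypothesis $\beta_T\ge1$ enters, since $x\mapsto x^{+,p}$ must be convex) discards the mean-zero martingale part $N^d_T$ after a pathwise verification that $\E(N^d_T(q))=0$; Lemma \ref{lem_lowB} then reduces the remaining moment to $\E(I_T^{\beta_T})=\infty$ in each of the three sign cases; and Lemma \ref{lem_aysmZ} converts the infinite $\beta_T$-moment of the resulting positive random variable into the tail lower bound along a sequence — your comparison argument, but applied to the full variable rather than to $I_T$ alone. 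If you wish to keep your route, you would have to replace $g(y/I_T)$ by a bound depending jointly on $(I_T,J_T)$ and then establish heaviness of the corresponding joint tail, which the hypothesis $\E(I_T^{\beta_T})=\infty$ does not directly provide.
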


To complete our study of the ruin problem in this setting, we give in our last result a sufficient condition for ruin with probability one in the particular case when $X$ is a Brownian motion with negative drift.

\begin{prop}\label{t3}
Assume that $X_t = a_X t + \sigma_X W_t$, for all $t \geq 0$, with $a_X \leq 0$, $\sigma_X \geq 0$ and $a_X^2 + \sigma_X > 0$. Assume also that $\lim_{t \to \infty}\frac{\hat{R}_t}{t} = \mu < 0$ $({\bf P}-a.s.)$. Then, for all $y > 0$,
$${\bf P}(\tau(y) < \infty) = 1.$$ 
\end{prop}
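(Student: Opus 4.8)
The plan is to show that, under the stated hypotheses, the process
$$Z_t := \int_0^t \frac{dX_s}{\mathcal{E}(R)_{s-}}, \qquad t \geq 0,$$
satisfies $\inf_{t \geq 0} Z_t = -\infty$ ${\bf P}$-a.s. In view of the representation \eqref{eq3} of $\tau(y)$ this is exactly what is needed: it says that for every $y > 0$ the level $-y$ is eventually crossed, i.e. $\tau(y) < \infty$ a.s. Since $X_t = a_X t + \sigma_X W_t$ and $\mathcal{E}(R)_{s-} = \mathcal{E}(R)_s = e^{\hat{R}_s}$ for Lebesgue-a.e. $s$, I would decompose
$$Z_t = a_X I_t + \sigma_X M_t, \qquad I_t = \int_0^t e^{-\hat{R}_s}\,ds, \quad M_t = \int_0^t \frac{dW_s}{\mathcal{E}(R)_{s-}},$$
and note that, by independence of $X$ and $R$, $W$ is still a Brownian motion in the filtration generated jointly by $X$ and $R$, so that $M$ is a continuous local martingale with $\langle M\rangle_t = \int_0^t \mathcal{E}(R)_{s-}^{-2}\,ds = J_t$.

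The first step is to use the drift condition on $R$ to obtain the growth of these functionals. Since $\hat{R}_t/t \to \mu < 0$ a.s., there is an a.s. finite random time $t_0$ with $\hat{R}_s \leq \tfrac{\mu}{2}s$ for all $s \geq t_0$; hence $e^{-\hat{R}_s} \geq e^{-\mu s/2} \to \infty$ and similarly $e^{-2\hat{R}_s} \to \infty$, so that $I_t \uparrow \infty$ and $J_t = \langle M\rangle_t \uparrow \infty$ a.s. as $t \to \infty$. In particular $\langle M\rangle_\infty = \infty$ a.s.

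The second step handles the martingale part: a continuous local martingale whose quadratic variation tends to $\infty$ necessarily oscillates between $-\infty$ and $+\infty$. Concretely, via the Dambis--Dubins--Schwarz time change one writes $M_t = \tilde{W}_{J_t}$ for a Brownian motion $\tilde{W}$, and since $t \mapsto J_t$ is continuous and increases from $0$ to $\infty$ its range is all of $[0,\infty)$, so $\inf_{t\geq 0} M_t = \inf_{u\geq 0}\tilde{W}_u = -\infty$ a.s.

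It then remains to combine the two parts. Since $a_X \leq 0$ we have $a_X I_t \leq 0$ and therefore $Z_t \leq \sigma_X M_t$ for all $t$. If $\sigma_X > 0$, then $\inf_{t\geq 0} Z_t \leq \sigma_X \inf_{t\geq 0} M_t = -\infty$ a.s.; if instead $\sigma_X = 0$, the hypothesis $a_X^2 + \sigma_X > 0$ forces $a_X < 0$, and then $Z_t = a_X I_t \to -\infty$ a.s. In either case $\inf_{t\geq 0} Z_t = -\infty$ a.s., which is what we wanted. I expect the main (and really the only) subtle point to be the justification of the time-change step — that $W$ remains a Brownian motion after enlarging the filtration by all of $\sigma(R)$, so that $M$ is a genuine continuous local martingale with the asserted quadratic variation $J_t$ — but this follows routinely from the independence of $X$ and $R$; everything else is elementary once $\mu < 0$ is used to force $I_\infty = J_\infty = \infty$.
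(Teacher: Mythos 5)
Your proof is correct and follows essentially the same route as the paper's: both reduce the problem to showing $I_\infty = J_\infty = \infty$ a.s.\ under the drift condition (this is Lemma \ref{lem_limfracR}) and then conclude from the oscillation of the time-changed Brownian motion when $\sigma_X>0$, respectively from $a_XI_t\to-\infty$ when $\sigma_X=0$. The only cosmetic difference is that you realize $\int_0^t \mathcal{E}(R)_{s-}^{-1}\,dW_s$ pathwise as $\tilde{W}_{J_t}$ via Dambis--Dubins--Schwarz in the joint filtration, whereas the paper obtains $-a_XI_t-\sigma_XW_{J_t}$ as an identity in law through Proposition \ref{l2}.
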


The rest of the paper is structured as follows. In Section 2, we point to the known results about exponential functionals of semimartingales, give a simple way to obtain $\beta_T$ and $\beta_{\infty}$ in the case when $R$ is a L\'{e}vy process and apply it to some examples. In Section 3, we prove Theorem \ref{t1} and, in Section 4, we prove Theorem \ref{t2}. In Section 5, we prove Theorem \ref{t3} and we obtain explicit conditions on the characteristics of $R$ to have $\lim_{t \to \infty}\frac{\hat{R}_t}{t} < 0$ $({\bf P}-a.s.)$. Finally, we show also that in the case when $R$ is a L\'evy process this corresponds to the known results.

\end{section}

\begin{section}{Exponential functionals of semimartingales}\label{s2}

Exponential functionals of semimartingales (especially of L\'evy processes) are very well-studied. The question of existence of the moments of $I_{\infty}$ and the formula in the case when $R$ is a subordinator was considered in \cite{BY}, \cite{CPY} and \cite{SV}. In the case when $R$ is a L\'evy process, the question of the existence of the density of the law of $I_{\infty}$, PDE equations for the density and the asymptotics for the law were investigated in \cite{Be}, \cite{BeL}, \cite{BJ}, \cite{BLM}, \cite{BS}, \cite{D}, \cite{EM}, \cite{GP}, \cite{KPS}, \cite{PRV}, \cite{PS} and \cite{R}. In the more general case of processes with independent increments, conditions for the existence of the moments and reccurent equations for the moments were studied in \cite{SV} and \cite{SV2}. The existence of the density of such functionals and the corresponding PDE equations were considered in \cite{V}. Here, we give two simple results concerning the finiteness of $\beta_T$ and $\beta_{\infty}$ when $R$ is a L\'evy process and apply them to the computation of $\beta_T$ and $\beta_{\infty}$ in some examples. Then, we present an example when $R$ is an additive process.

First of all, we give some basic facts about the exponential transform $\hat{R} = (\hat{R}_t)_{t\geq 0}$ of $R$, i.e. the process defined by
$$\mathcal{E}(R)_t= \exp(\hat{R}_t).$$
Since
$$\mathcal{E}(\hat{R}_t)= \exp\left( R_t-\frac{1}{2}\langle R^c\rangle_t + \sum_{0<s\leq t}(\ln(1+\Delta R_s)-\Delta R_s)\right)$$
we get that
$$\hat{R}_t= R_t-\frac{1}{2}\langle R^c\rangle_t + \sum_{0<s\leq t}(\ln(1+\Delta R_s)-\Delta R_s).$$
When $R$ is a semimartingale, the process $\hat{R}$ is also a semimartingale and the jumps of $\hat{R}$ are given by
$$\Delta \hat{R}_t = \ln(1+\Delta R_t), \text{ for all } t \geq 0.$$ 
Similarly, when $R$ is a L\'{e}vy process, the process $\hat{R}$ is also a L\'{e}vy process.

\begin{prop}\label{p0} Suppose that $R$ is a L\'{e}vy process. For $\alpha > 0$ and $T > 0$ the following conditions are equivalent:
\begin{enumerate}
\item[(i)] $\E(J_T(\alpha))<\infty,$
\item[(ii)] $\int_{|x| > 1}e^{-\alpha x}\nu_{\hat{R}}(dx) < \infty,$
\item[(iii)] $\int_{-1}^{\infty}\ind_{\{|\ln(1+x)| > 1\}}(1+x)^{-\alpha}\nu_{R}(dx) < \infty.$
\end{enumerate}
\end{prop}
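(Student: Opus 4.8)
The plan is to establish (i) $\Leftrightarrow$ (ii) $\Leftrightarrow$ (iii) by reducing everything to two classical ingredients: Tonelli's theorem and the ``all-or-nothing'' dichotomy for exponential moments of a L\'evy process. The equivalence (ii) $\Leftrightarrow$ (iii) will be a pure change of variables, and (i) $\Leftrightarrow$ (ii) will come from integrating the exponential moment of $\hat R$ over $[0,T]$.

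First I would record the relation between the two L\'evy measures. Since $\Delta\hat R_t = \ln(1+\Delta R_t)$ and the standing assumption $\Delta R_t > -1$ makes $x \mapsto \ln(1+x)$ well defined on the support of $\nu_R$, the jump measure $\mu_{\hat R}$ is the image of $\mu_R$ under $(s,x) \mapsto (s,\ln(1+x))$; hence $\nu_{\hat R}$ is the pushforward of $\nu_R$ by $x \mapsto \ln(1+x)$ (and, as recalled above, $\hat R$ is again a L\'evy process). The substitution $y = \ln(1+x)$ then turns $\int_{|y|>1} e^{-\alpha y}\,\nu_{\hat R}(dy)$ into $\int_{-1}^{\infty}\ind_{\{|\ln(1+x)| > 1\}}(1+x)^{-\alpha}\,\nu_R(dx)$, so (ii) and (iii) are literally the same condition.

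Next, for (i) $\Leftrightarrow$ (ii): since $e^{-\alpha\hat R_s} \ge 0$, Tonelli's theorem gives, as an identity valued in $[0,+\infty]$,
$$\E(J_T(\alpha)) = \int_0^T \E\big(e^{-\alpha\hat R_s}\big)\,ds.$$
I would then invoke the standard exponential-moment criterion for L\'evy processes (see e.g. Theorems~25.3 and~25.17 in \cite{Sa}): for fixed $\alpha > 0$, either $\int_{|x|>1}e^{-\alpha x}\nu_{\hat R}(dx) < \infty$, in which case $\E(e^{-\alpha\hat R_s}) = e^{sc}$ for all $s \ge 0$ with $c$ the finite value at $-\alpha$ of the Laplace exponent of $\hat R$; or that integral is infinite, in which case $\E(e^{-\alpha\hat R_s}) = +\infty$ for every $s > 0$. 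In the first case $s \mapsto \E(e^{-\alpha\hat R_s})$ is continuous, hence bounded on the compact interval $[0,T]$, so the displayed identity gives $\E(J_T(\alpha)) < \infty$; in the second case the same identity forces $\E(J_T(\alpha)) = +\infty$. Combining with the previous paragraph yields the proposition.

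The statement is essentially an assembly of known facts, so there is no serious obstacle; the points requiring care are (a) citing the L\'evy exponential-moment dichotomy in the precise form ``finite at one positive time $\Leftrightarrow$ finite at all times $\Leftrightarrow$ tail condition on the L\'evy measure'', and (b) using the continuity of $s \mapsto \E(e^{-\alpha\hat R_s})$ to pass from finiteness at each $s$ to integrability over $[0,T]$, where compactness of the horizon is what makes this step automatic. One should also state the pushforward description of $\nu_{\hat R}$ cleanly, noting that no jump mass is lost or created precisely because $\Delta R_t > -1$ identically.
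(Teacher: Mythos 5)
Your proposal is correct and follows essentially the same route as the paper: Tonelli/Fubini to reduce $\E(J_T(\alpha))$ to $\int_0^T\E(e^{-\alpha\hat R_s})\,ds$, Sato's exponential-moment criterion (Theorem 25.3) for the equivalence with the tail condition on $\nu_{\hat R}$, and the identification of $\nu_{\hat R}$ as the pushforward of $\nu_R$ under $x\mapsto\ln(1+x)$ for (ii)$\Leftrightarrow$(iii), which the paper obtains by the equivalent device of writing both integrals as expectations of sums over jumps. If anything, you are slightly more careful than the paper in passing from ``$\E(e^{-\alpha\hat R_s})<\infty$ for each $s$'' to integrability over $[0,T]$, via the explicit formula $e^{s\psi_{\hat R}(-\alpha)}$ and boundedness on the compact horizon.
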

\begin{proof}
By Fubini's theorem, we obtain
$$\E(J_T(\alpha)) = \E\left(\int_0^Te^{-\alpha\hat{R}_t}dt\right) = \int_0^T\E(e^{-\alpha \hat{R}_t})dt.$$
So, $\E(J_T(\alpha)) < \infty$ is equivalent to $\E(e^{-\alpha \hat{R}_t}) < \infty$, for all $t \geq 0$, which, by Theorem 25.3, p.159 in \cite{Sa}, is equivalent to 
$$\int_{|x| > 1}e^{-\alpha x}\nu_{\hat{R}}(dx) < \infty.$$
Then, note that
\begin{equation*}
\begin{split}
\int_{|x| > 1}e^{-\alpha x}\nu_{\hat{R}}(dx) & = \int_0^1\int_{|x| > 1}e^{-\alpha x}\nu_{\hat{R}}(dx)ds \\
& = \E\left(\sum_{0 < s \leq 1}e^{-\alpha \Delta \hat{R}_s}\ind_{\{|\Delta \hat{R}_s| > 1\}}\right) \\
& = \E\left(\sum_{0 < s \leq 1}(1+\Delta R_s)^{-\alpha}\ind_{\{|\ln(1+\Delta R_s)| > 1\}}\right) \\
& = \int_{-1}^{\infty}\ind_{\{|\ln(1+x)| > 1\}}(1+x)^{-\alpha}\nu_{R}(dx).
\end{split}
\end{equation*}
\end{proof}

Proposition \ref{p0} allows us to compute $\beta_T$ in some standard models of mathematical finance.

\begin{ex}\rm
Suppose that $\hat{R}$ is given by
$\hat{R}_t = a_{\hat{R}} t + \sigma_{\hat{R}} W_t + \sum_{n = 0}^{N_t}Y_n,$
where $a_{\hat{R}} \in \mathbb{R}$, $\sigma_{\hat{R}} \geq 0$, $W = (W_t)_{t \geq 0}$ is a standard Brownian motion and $N = (N_t)_{t \geq 0}$ is a Poisson process with rate $\gamma > 0$, and $(Y_n)_{n \in \mathbb{N}}$ is a sequence of iid random variables. Suppose, in addition, that all processes involved are independent. If for $(Y_n)_{n \in \mathbb{N}}$ we take any sequence of iid random variables with $\E(e^{-\alpha Y_1}) < \infty$, for all $\alpha > 0$, then $\beta_T = +\infty$. If for $(Y_n)_{n \in \mathbb{N}}$ we take a sequence of iid random variables with $\E(e^{-\alpha Y_1}) < \infty$, when $\alpha < \alpha_0$, for some $\alpha_0 > 0$, and $\E(e^{-\alpha_0 Y_1}) = +\infty$, then $\beta_T = \alpha_0$.
\end{ex}

\begin{ex}\rm
Suppose that $\hat{R}$ is a L\'{e}vy process with triplet $(a_{\hat{R}}, \sigma_{\hat{R}}^2, \nu_{\hat{R}})$, where $a_{\hat{R}} \in \mathbb{R}$, $\sigma_{\hat{R}} \geq 0$ and $\nu_{\hat{R}}$ is the measure on $\mathbb{R}$ given by
$$\nu_{\hat{R}}(dx) = \left(C_1 |x|^{-(1+\alpha_1)}e^{-\lambda_1|x|}\ind_{\{x < 0\}} + C_2 x^{-(1+\alpha_2)}e^{-\lambda_2 x}\ind_{\{x > 0\}}\right)dx,$$
where $C_1, C_2 > 0$, $\lambda_1, \lambda_2 > 0$ and $\alpha_1, \alpha_2 < 2$. This specification includes as special cases the Kou, CGMY and variance-gamma models (see e.g. Section 4.5 p.119 in \cite{cont}). We will show that if $\lambda_1 \geq 2$, then $\beta_T = \lambda_1$. Note that, using Proposition \ref{p0} and the change of variables $y = -x$, we see that $\E(J_T(\alpha)) < \infty$, for $\alpha > 0$, is equivalent to
$$C_1\int_{1}^{\infty}y^{-(1+\alpha_1)}e^{-(\lambda_1 - \alpha)y}dy + C_2 \int_{1}^{\infty}x^{-(1+\alpha_2)}e^{-(\alpha + \lambda_2) x}dx < \infty.$$
But, the first integral converges if $\alpha < \lambda_1$ and diverges if $\alpha > \lambda_1$ and second integral always converges. Now, if $\alpha \geq 2$, it is easy to show that $\E(J_T(\alpha)) < \infty$ implies $\E(J_T^{\alpha/2}) < \infty$ (see Lemma \ref{lem_csInt} below). Thus, if $\lambda_1 \geq 2$, we have $\beta_T = \lambda_1$.
\end{ex}

We now give an example when $R$ is not a L\'{e}vy process.
\begin{ex}\rm
Suppose that $L = (L_t)_{t \geq 0}$ is a L\'{e}vy process with triplet $(a_L, \sigma_L^2, \nu_L)$ where $\nu_L$ is assumed to be absolutely continuous w.r.t. the Lebesgue measure with density $f_L$. Suppose that $g$ is a deterministic, positive, measurable and square-integrable function on $\mathbb{R}$. Let $R_t = \int_0^tg(s-)dL_s$, for all $t \geq 0$. Then, in general, $R$ is a process with independent but non-homogeneous increments. From Proposition 1 and Example 3 in \cite{SV}, we see that, if $\alpha \geq 2$ and
\begin{equation}\label{eq_ex3}
\int_0^T\int_{x < -1}e^{-\alpha x g(s)}\nu_L(dx)ds < +\infty
\end{equation}
then $\E(J_T(\alpha)) < +\infty$ and, by Lemma \ref{lem_csInt} below, $\E(J_T^{\alpha/2}) < +\infty$. Thus, $\alpha \leq \beta_T$.
\end{ex}

\begin{prop}\label{p3} Suppose that the L\'evy process $\hat{R}$ admits a Laplace transform, for all $ t \geq 0$, i.e. for $\alpha >0$
$$\E(\exp(-\alpha \hat{R}_t))=\exp (t\psi_{\hat{R}} (\alpha))$$
and that its Laplace exponent $\psi_{\hat{R}}$ has a strictly positive root $\beta$. Then the following conditions are equivalent:
\begin{enumerate}
\item[(i)] $\E(I_{\infty}^{\alpha})<\infty$,
\item[(ii)] $\E(J_{\infty}^{\alpha/2})<\infty$,
\item[(iii)]$\E(J_{\infty}(\alpha))<\infty,$
\item[(iv)] $\alpha < \beta.$
\end{enumerate}
Therefore, $\beta_{\infty} = \beta$.
\end{prop}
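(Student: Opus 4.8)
The plan is to reduce all three integrability conditions to the single inequality $\psi_{\hat{R}}(\alpha)<0$, and then to read off the equivalence with $\alpha<\beta$ from the convexity of the Laplace exponent. First I would record the elementary properties of $\psi_{\hat{R}}$: as a function of $\alpha$ it is convex on $(0,\infty)$ (Hölder's inequality), finite there by hypothesis, and satisfies $\psi_{\hat{R}}(0)=\psi_{\hat{R}}(\beta)=0$. Excluding the trivial case $\hat{R}\equiv 0$, convexity then forces $\psi_{\hat{R}}(\alpha)<0$ for $0<\alpha<\beta$ and $\psi_{\hat{R}}(\alpha)>0$ for $\alpha>\beta$, and also $\psi_{\hat{R}}'(0^{+})<0$, i.e. $\E(\hat{R}_1)>0$; by the strong law $\hat{R}_t\to+\infty$ $({\bf P}$-a.s.$)$, so $I_{\infty}$ and $J_{\infty}(\alpha)$ are a.s. finite. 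Thus $(iv)$ is exactly the statement $\psi_{\hat{R}}(\alpha)<0$, and it remains to prove that each of $(i)$, $(ii)$, $(iii)$ holds if and only if $\psi_{\hat{R}}(\alpha)<0$.

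Condition $(iii)$ is immediate from Tonelli's theorem: $\E(J_{\infty}(\alpha))=\int_0^{\infty}\E(e^{-\alpha\hat{R}_s})\,ds=\int_0^{\infty}e^{s\psi_{\hat{R}}(\alpha)}\,ds$, which is finite precisely when $\psi_{\hat{R}}(\alpha)<0$. For $(i)$ and $(ii)$ the point is that $I_{\infty}=\int_0^{\infty}e^{-\hat{R}_s}ds$ and $J_{\infty}=\int_0^{\infty}e^{-2\hat{R}_s}ds$ are the exponential functionals of the Lévy processes $\hat{R}$ and $2\hat{R}$ (both drifting to $+\infty$), and that the associated Laplace-exponent value is the same in every case: for $\hat{R}$ at order $\alpha$ it is $\psi_{\hat{R}}(\alpha)$; for $2\hat{R}$ at order $\alpha/2$ it is $\log\E(e^{-(\alpha/2)(2\hat{R}_1)})=\psi_{\hat{R}}(\alpha)$; and for $\alpha\hat{R}$ at order $1$ it is again $\psi_{\hat{R}}(\alpha)$. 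One may then invoke the known characterisation of finiteness of moments of exponential functionals of Lévy processes (see the literature recalled in Section~\ref{s2}): for a Lévy process $\eta$ drifting to $+\infty$ with $\E(e^{-p\eta_1})<\infty$ (automatically satisfied here), one has $\E\big((\int_0^{\infty}e^{-\eta_s}ds)^{p}\big)<\infty$ if and only if $\E(e^{-p\eta_1})<1$. Applying this with $(\eta,p)=(\hat{R},\alpha)$ and with $(\eta,p)=(2\hat{R},\alpha/2)$ yields $(i)\Leftrightarrow[\psi_{\hat{R}}(\alpha)<0]$ and $(ii)\Leftrightarrow[\psi_{\hat{R}}(\alpha)<0]$, so all four conditions are equivalent. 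Finally, $\beta_{\infty}$ is by definition the supremum of those $\beta'\ge 0$ at which all three expectations are finite; by the above this set is $(0,\beta)$, so $\beta_{\infty}=\beta$.

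Should one prefer a self-contained proof of $(i)$ and $(ii)$, I would argue as follows. For finiteness when $\psi_{\hat{R}}(\alpha)<0$, split $I_{\infty}=\sum_{n\ge 0}e^{-\hat{R}_n}\zeta_n$ with $\zeta_n=\int_n^{n+1}e^{-(\hat{R}_s-\hat{R}_n)}ds$, observe that $\zeta_n\stackrel{d}{=}\int_0^1 e^{-\hat{R}_s}ds$ is independent of $\hat{R}_n$, and bound $\E(\zeta_0^{p})$ by $\int_0^1 e^{s\psi_{\hat{R}}(p)}ds$ for $p\ge 1$ (resp. by $(\E\zeta_0)^{p}$ for $p\le 1$) via Jensen; a subadditivity estimate for $\alpha\le 1$ and a Hölder estimate with a polynomial weight for $\alpha>1$, together with $\E(e^{-\alpha\hat{R}_n})=e^{n\psi_{\hat{R}}(\alpha)}$, bound $\E(I_{\infty}^{\alpha})$ by a convergent series of the form $\sum_n(n+1)^{c}e^{n\psi_{\hat{R}}(\alpha)}$, and the same works for $J_{\infty}$ after replacing $\hat{R}$ by $2\hat{R}$ and $\alpha$ by $\alpha/2$. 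For the converse, use the identity $I_{\infty}=\int_0^1 e^{-\hat{R}_s}ds+e^{-\hat{R}_1}I_{\infty}'$ with $I_{\infty}'\stackrel{d}{=}I_{\infty}$ independent of $\mathcal{F}_1$: from $I_{\infty}\ge e^{-\hat{R}_1}I_{\infty}'$ we get $\E(I_{\infty}^{\alpha})\ge e^{\psi_{\hat{R}}(\alpha)}\E(I_{\infty}^{\alpha})$, which forces $\E(I_{\infty}^{\alpha})=\infty$ whenever $\psi_{\hat{R}}(\alpha)>0$; at $\alpha=\beta$, where $e^{\psi_{\hat{R}}(\beta)}=1$, one uses instead the convexity inequality $(a+b)^{\beta}\ge b^{\beta}+\beta b^{\beta-1}a$, valid for $\beta\ge 1$, to deduce under the assumption $\E(I_{\infty}^{\beta})<\infty$ that $0\ge\beta\,\E\big(e^{-(\beta-1)\hat{R}_1}\int_0^1 e^{-\hat{R}_s}ds\big)\,\E\big((I_{\infty}')^{\beta-1}\big)$, a contradiction since the right-hand side is strictly positive. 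This boundary case is the only delicate point: when $\beta<1$ the convexity trick is unavailable, and to obtain $\E(I_{\infty}^{\beta})=\E(J_{\infty}^{\beta/2})=\infty$ one must appeal to the precise tail estimate ${\bf P}(I_{\infty}>x)\asymp x^{-\beta}$ from the Cramér case of implicit renewal theory for exponential functionals (again from the references of Section~\ref{s2}). I expect this boundary behaviour to be the main obstacle to a completely elementary argument.
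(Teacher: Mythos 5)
Your main argument is essentially the paper's own proof: you reduce (i)--(iii) to the sign of $\psi_{\hat{R}}(\alpha)$ via the scaling identity $\psi_{k\hat{R}}(\alpha/k)=\psi_{\hat{R}}(\alpha)$ and then invoke the known characterization of moment finiteness for exponential functionals of L\'evy processes drifting to $+\infty$, which is exactly Lemma 3 of \cite{R} cited in the paper. The convexity discussion, the Tonelli computation for (iii), and the optional self-contained sketch are consistent additions but do not change the route.
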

\begin{proof}
Note that, for any $\alpha > 0$ and $k > 0$, 
\begin{equation*}
\begin{split}
\exp(t\psi_{\hat{R}}(\alpha)) & = \E(\exp(-\alpha \hat{R}_t)) = \E\left(\exp\left(-\frac{\alpha}{k} k\hat{R}_t\right)\right) \\
& = \exp\left(t\psi_{k\hat{R}}\left(\frac{\alpha}{k}\right)\right).
\end{split}
\end{equation*}
Therefore, $\psi_{\hat{R}}(\alpha) = \psi_{k\hat{R}}\left(\frac{\alpha}{k}\right)$, for all $\alpha > 0$ and $k > 0$. Then, Lemma 3 in \cite{R} yields the desired result.
\end{proof}

\begin{rem}
Note that the root of the Laplace exponent was already identified as the relevant quantity for the tails of ${\bf P}(\tau(y) < \infty)$ in \cite{P}.
\end{rem}

Using Proposition \ref{p3} we can compute $\beta_{\infty}$ in two important examples.

\begin{ex}\rm
Suppose that $R_t = a_{R}t +\sigma_R W_t$, for all $t \geq 0$, where $a_R \in \mathbb{R}$, $\sigma_R > 0$ and $W = (W_t)_{t \geq 0}$ is a standard Brownian motion, then $\hat{R}_t = \left(a_R - \frac{\sigma_R^2}{2}\right)t + \sigma_R W_t$, for all $t \geq 0$. Thus, we obtain $\psi_{\hat{R}}(\alpha) = -\left(a_R - \frac{1}{2}\sigma_R^2\right)\alpha + \frac{\sigma_R^2}{2}\alpha^2$ and, by Proposition \ref{p3}, $\beta_{\infty} = \frac{2a_R}{\sigma_R^2} - 1$. We remark that this coincides with the results in e.g. \cite{F} and \cite{KP2}.
\end{ex}

\begin{ex}\rm
Suppose that $\hat{R}_t = a_{\hat{R}}t + \sigma_{\hat{R}} W_t + \sum_{n = 0}^{N_t}Y_n$, where $a_{\hat{R}} \in \mathbb{R}$, $\sigma_{\hat{R}} \geq 0$ and $W = (W_t)_{t \geq 0}$ is a standard Brownian motion and $N = (N_t)_{t \geq 0}$ is a Poisson process with rate $\gamma > 0$, and $(Y_n)_{n \in \mathbb{N}}$ is a sequence of iid random variables with $\E(e^{-\alpha Y_1}) < \infty$, for all $\alpha > 0$. Suppose, in addition, that all processes involved are independent. It is easy to see that, for all $\alpha > 0$,
$$\psi_{\hat{R}}(\alpha) = -a_{\hat{R}}\alpha + \frac{\sigma_{\hat{R}}^2}{2}\alpha^2 + \gamma \left(\E(e^{-\alpha Y_1}) - 1\right).$$ 
Now, it is possible to show (see e.g. \cite{Spiel}) that the equation $\psi_{\hat{R}}(\alpha) = 0$ has an unique non-zero solution if, and only if, $\hat{R}$ is not a subordinator and $\psi'(0+) < 0$ which, under some additional conditions to invert the differentiation and expectation operators, is equivalent to $a_{\hat{R}} > \gamma \E(Y_1)$ (and which corresponds, in actuarial theory, to the "safety loading condition"). In that case, $\beta_{\infty}$ is the unique non-zero real solution of this equation.
\end{ex}

\end{section}

\begin{section}{Upper bound}\label{s3}

In this section, we prove Theorem \ref{t1}. We start with some preliminary results.

\begin{lem}\label{lem_csInt}
For all $T > 0$, we have the following.
\begin{enumerate}
\item[(a)] If $0 < \alpha < 2$, then $\E(J_T^{\alpha/2})<\infty$ implies $\E(I_T^{\alpha}) < \infty$ and $\E(J_T(\alpha)) < \infty$.
\item[(b)] If $\alpha \geq 2$, $\E(J_T(\alpha)) < \infty$ implies $\E(I_T^{\alpha}) < \infty$ and $\E(J_T^{\alpha/2}) < \infty$.
\end{enumerate}
\end{lem}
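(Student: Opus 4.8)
The plan is to reduce all four implications of Lemma~\ref{lem_csInt} to a handful of \emph{pointwise} inequalities (deterministic, for each fixed $\omega$) between the random variables $I_T$, $J_T(\alpha)$ and $J_T=J_T(2)$; once those are in hand, the conclusions follow simply by taking expectations, since all quantities involved are nonnegative. The only tools needed are the Cauchy--Schwarz inequality and Jensen's inequality, applied with respect to the \emph{normalized} Lebesgue measure $\tfrac{ds}{T}$ on $[0,T]$, which is a probability measure; this normalization is precisely what produces the powers of $T$ in the constants.

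For part (a), fix $0<\alpha<2$ and assume $\E(J_T^{\alpha/2})<\infty$. By Cauchy--Schwarz, $I_T=\int_0^T 1\cdot e^{-\hat{R}_s}\,ds\le T^{1/2}\bigl(\int_0^T e^{-2\hat{R}_s}\,ds\bigr)^{1/2}=T^{1/2}J_T^{1/2}$, so $I_T^{\alpha}\le T^{\alpha/2}J_T^{\alpha/2}$ and hence $\E(I_T^{\alpha})<\infty$. For $J_T(\alpha)$, I would use that $x\mapsto x^{2/\alpha}$ is convex on $[0,\infty)$ (here $2/\alpha>1$) and apply Jensen with the probability measure $\tfrac{ds}{T}$ to $s\mapsto e^{-\alpha\hat{R}_s}$, giving $\bigl(\tfrac1T\int_0^T e^{-\alpha\hat{R}_s}\,ds\bigr)^{2/\alpha}\le \tfrac1T\int_0^T e^{-2\hat{R}_s}\,ds$, i.e. $J_T(\alpha)\le T^{1-\alpha/2}J_T^{\alpha/2}$; taking expectations gives $\E(J_T(\alpha))<\infty$.

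For part (b), fix $\alpha\ge 2$ and assume $\E(J_T(\alpha))<\infty$. Now both $x\mapsto x^{\alpha}$ and $x\mapsto x^{\alpha/2}$ are convex on $[0,\infty)$ (since $\alpha\ge 1$ and $\alpha/2\ge 1$), and I apply Jensen with $\tfrac{ds}{T}$ twice. Applied to $s\mapsto e^{-\hat{R}_s}$ with exponent $\alpha$ it gives $\bigl(\tfrac1T\int_0^T e^{-\hat{R}_s}\,ds\bigr)^{\alpha}\le \tfrac1T\int_0^T e^{-\alpha\hat{R}_s}\,ds$, i.e. $I_T^{\alpha}\le T^{\alpha-1}J_T(\alpha)$; applied to $s\mapsto e^{-2\hat{R}_s}$ with exponent $\alpha/2$ it gives $\bigl(\tfrac1T\int_0^T e^{-2\hat{R}_s}\,ds\bigr)^{\alpha/2}\le \tfrac1T\int_0^T e^{-\alpha\hat{R}_s}\,ds$, i.e. $J_T^{\alpha/2}\le T^{\alpha/2-1}J_T(\alpha)$. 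Taking expectations yields $\E(I_T^{\alpha})<\infty$ and $\E(J_T^{\alpha/2})<\infty$.

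There is no genuine obstacle here: the whole argument is just the interplay between the $L^p$-norms (with respect to $\tfrac{ds}{T}$) of $e^{-\hat{R}_s}$ for the exponents $1$, $2$ and $\alpha$. The only points requiring care are the bookkeeping of the $T$-powers and, more importantly, checking that the convexity Jensen needs actually holds — which is exactly why the statement splits at $\alpha=2$: for $\alpha<2$ one can only pass ``down'' from $J_T=J_T(2)$ to $I_T$ and $J_T(\alpha)$, whereas for $\alpha\ge 2$ one can pass ``up'' from $J_T(\alpha)$ to both $I_T$ and $J_T$.
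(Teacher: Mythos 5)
Your proof is correct and follows essentially the same route as the paper: pointwise inequalities between $I_T$, $J_T$ and $J_T(\alpha)$ (the paper uses Cauchy--Schwarz and H\"older where you use Cauchy--Schwarz and Jensen with the normalized measure $\tfrac{ds}{T}$, which are interchangeable here and yield the same constants), followed by taking expectations. The only cosmetic difference is that in part (b) you bound $I_T^{\alpha}$ directly by $T^{\alpha-1}J_T(\alpha)$, whereas the paper chains through $\E(J_T^{\alpha/2})$; both are fine.
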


\begin{proof}
First note that by the Cauchy-Schwarz inequality we obtain, for all $T > 0$,
$$I_T = \int_0^T\mathcal{E}(R)_{s}^{-1}ds \leq \sqrt{T}\left(\int_0^T\mathcal{E}(R)_{s}^{-2}ds\right)^{1/2} = \sqrt{T}\sqrt{J_T}.$$
So, $\E(I_T^{\alpha}) \leq T^{\alpha/2}\E(J_T^{\alpha/2})$, for all $\alpha > 0$.

Now, if $0 < \alpha < 2$, we have $\frac{2}{\alpha} > 1$ and by H\"{o}lder's inequality
$$J_T(\alpha) = \int_0^T\mathcal{E}(R)^{-\alpha}_{s} ds \leq T^{(2-\alpha)/2}\left(\int_0^T\mathcal{E}(R)^{-2}_{s} ds\right)^{\alpha/2} = T^{(2-\alpha)/2}J_T^{\alpha/2}.$$
These inequalities yield (a).

Now, if $\alpha \geq 2$, we have either $\alpha = 2$ which yields the desired result or $\alpha > 2$. In that case, we have $\frac{\alpha}{2} > 1$ and, by H\"{o}lder's inequality, we obtain
\begin{equation*}
\begin{split}
J_T & = \int_0^T\mathcal{E}(R)^{-2}_{s} ds \leq T^{(\alpha-2)/\alpha}\left(\int_0^T\mathcal{E}(R)^{-\alpha}_{s} ds\right)^{2/\alpha} \\
& = T^{(\alpha-2)/\alpha}J_T(\alpha)^{2/\alpha}.
\end{split}
\end{equation*}
So, $\E(J_T^{\alpha/2}) \leq T^{(\alpha-2)/2}\E(J_T(\alpha))$, which yields (b).
\end{proof}

Denote by $M^d=(M^d_t)_{t \geq 0}$ the local martingale defined as:
$$M^d_t = \int_0^t\int_{|x|\leq 1}\frac{x}{\mathcal{E}(R)_{s-}}(\mu_X(ds,dx)-\nu_X(dx)ds)$$
and by $U = (U_t)_{t\geq 0}$ the process given by
$$U_t=\int_0^t\int_{|x|>1}\frac{x}{\mathcal{E}(R)_{s-}}\mu_X(ds,dx).$$
If $\int_{|x| > 1}|x|\nu_X(dx) < +\infty$, we can also define the local martingale $N^d = (N^d_t)_{t \geq 0}$ as
$$N^d_t = \int_0^t\int_{\mathbb{R}}\frac{x}{\mathcal{E}(R)_{s-}}(\mu_X(ds,dx)-\nu_X(dx)ds).$$

\begin{prop}\label{l2} We have the following identity in law:
\begin{equation*}\label{eq4}
\left(\int_0^t \frac{dX_s}{\mathcal{E}(R)_{s-}}\right)_{t\geq 0} \stackrel{\mathcal{L}}{=}
\left(a_X I_t + \sigma_X W_{J_t}+M^d_t+U_t\right) _{t\geq 0}.
\end{equation*}
Moreover, if $\int_{|x| > 1}|x|\nu_X(dx) < +\infty$, then,
\begin{equation*}
\left(\int_0^t \frac{dX_s}{\mathcal{E}(R)_{s-}}\right)_{t\geq 0} \stackrel{\mathcal{L}}{=}
\left(\delta_X I_t + \sigma_X W_{J_t} + N^d_t\right) _{t\geq 0},
\end{equation*}
where $\delta_X = a_X + \int_{|x| > 1}x\nu_X(dx)$.
\end{prop}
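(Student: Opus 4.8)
The plan is to expand $Z_t:=\int_0^t \mathcal{E}(R)_{s-}^{-1}\,dX_s$ into four pieces via the L\'evy--It\^o decomposition \eqref{decx} of $X$, to recognise three of them as $a_X I_t$, $M^d_t$, $U_t$, and to identify the Brownian piece in law with $\sigma_X W_{J_t}$ by conditioning on $R$. First note that $\hat R$ is c\`adl\`ag, hence bounded on every compact $[0,t]$, so $\inf_{0\le s\le t}\mathcal{E}(R)_s>0$; thus the predictable process $\mathcal{E}(R)_{s-}^{-1}$ is locally bounded, the four stochastic integrals below all make sense, and $I_t<\infty$ a.s. By linearity of the stochastic integral applied to \eqref{decx},
$$Z_t=a_X\int_0^t\frac{ds}{\mathcal{E}(R)_{s-}}+\sigma_X\int_0^t\frac{dW_s}{\mathcal{E}(R)_{s-}}+M^d_t+U_t;$$
since $\hat R$ has at most countably many jumps, $\mathcal{E}(R)_{s-}=\mathcal{E}(R)_s$ for Lebesgue-a.e.\ $s$, so the first term equals $a_X I_t$.

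It remains to treat $G_t:=\sigma_X\int_0^t\mathcal{E}(R)_{s-}^{-1}\,dW_s$, and here I would condition on $\sigma(R)$. Since $X\perp R$ and, inside the L\'evy--It\^o decomposition of $X$, $W$ and $\mu_X$ are independent, the triple $(W,\mu_X,R)$ has product law; hence, conditionally on $R$, $W$ is still a Brownian motion, $\mu_X$ is still the same Poisson random measure, and $W,\mu_X$ stay independent. Now $I,J$ are $\sigma(R)$-measurable, $M^d,U$ are functions of $(R,\mu_X)$, and $G$ is a function of $(R,W)$; so, conditionally on $R$, $G$ is independent of $(a_X I+M^d+U)$. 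Moreover, conditionally on $R$ the integrand of $G$ is deterministic, so $G$ is a continuous centred Gaussian process and, by the It\^o isometry, $\mathrm{Cov}(G_s,G_t\mid R)=\sigma_X^2\int_0^{s\wedge t}\mathcal{E}(R)_u^{-2}\,du=\sigma_X^2 J_{s\wedge t}$. Since $J$ is $\sigma(R)$-measurable, continuous and non-decreasing, the process $(\sigma_X W_{J_t})_{t\ge 0}$, for a Brownian motion $W$ independent of $R$, is likewise continuous and, given $R$, centred Gaussian with the same covariance; hence $G$ and $(\sigma_X W_{J_t})_t$ have the same conditional law given $R$.

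Assembling: conditionally on $R$, $Z=G+(a_X I+M^d+U)$ is a sum of two independent processes, the first equal in conditional law to $(\sigma_X W_{J_t})_t$ (with $W$ a fresh Brownian motion independent of $R$ and of $\mu_X$, hence of $M^d,U$) and the second equal to $(a_X I_t+M^d_t+U_t)_t$; therefore $Z$ has the same conditional law as $a_X I+\sigma_X W_{J_\cdot}+M^d+U$, and integrating over $R$ gives the first identity in law. For the second identity, when $\int_{|x|>1}|x|\,\nu_X(dx)<\infty$ the a.s.\ finiteness of $I_t$ lets us compensate the large jumps,
$$U_t=\int_0^t\!\!\int_{|x|>1}\frac{x}{\mathcal{E}(R)_{s-}}\big(\mu_X(ds,dx)-\nu_X(dx)\,ds\big)+\Big(\int_{|x|>1}x\,\nu_X(dx)\Big)I_t,$$
and combining the compensated large-jump term with $M^d$ gives $N^d$ while the extra drift merges with $a_X I_t$ into $\delta_X I_t$; the same conditioning argument then yields $Z_t\stackrel{\mathcal{L}}{=}\delta_X I_t+\sigma_X W_{J_t}+N^d_t$.

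The step needing the most care is the Brownian piece: extracting from the product law of $(W,\mu_X,R)$ both the conditional Gaussianity of $G$ and its conditional independence from $(M^d,U)$, together with the observation that two continuous Gaussian processes with the same mean and covariance have the same law on path space — which is what makes the time change work without invoking Dambis--Dubins--Schwarz, and hence without having to worry about whether $J_\infty=\infty$. Everything else is bookkeeping with the linearity of the stochastic integral.
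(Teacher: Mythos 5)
Your proposal is correct in substance and follows essentially the same route as the paper: decompose the integrand via the L\'evy--It\^o decomposition \eqref{decx}, condition on $R$ so that the integrand becomes deterministic, identify the Brownian piece in law with $\sigma_X W_{J_\cdot}$ via its (conditional) Gaussian covariance, and integrate back over the law of $\mathcal{E}(R)$; the second identity is obtained, as in the paper, by compensating the large jumps (the paper simply starts from the alternative decomposition $X_t=\delta_X t+\sigma_X W_t+\int_0^t\int_{\mathbb{R}}x(\mu_X-\nu_X\,ds)$, which is the same bookkeeping).

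The one step you pass over quickly is the one the paper spends most of its proof on: the identity
$\mathcal{L}\bigl(\int_0^\cdot \mathcal{E}(R)_{s-}^{-1}\,dX_s \mid \mathcal{E}(R)=q\bigr)=\mathcal{L}\bigl(\int_0^\cdot q_{s-}^{-1}\,dX_s\bigr)$.
You justify it by saying that $(W,\mu_X,R)$ has product law, so that conditionally on $R$ the drivers are unchanged and the integrand is deterministic. The idea is right, but the stochastic integral is defined as a limit in probability under the full measure ${\bf P}$ with respect to a filtration carrying both $X$ and $R$, so one must still check that this ${\bf P}$-a.s.\ defined object, viewed as a functional of $(X,R)$, can be evaluated ``pathwise in $R$'' and then agrees with the integral against the frozen deterministic integrand $q_{s-}^{-1}$. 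The paper does exactly this by approximating both integrals with the same Riemann sums along a deterministic Riemann sequence (Proposition I.4.44 of Jacod--Shiryaev), for which the conditional identity is immediate, and passing to the limit in the finite-dimensional distributions. Your proof would be complete with this (standard) approximation argument inserted; everything else, including the direct Gaussian-covariance identification of the time-changed Brownian part in place of the quoted identity $\int_0^\cdot q_{s-}^{-1}dW_s\stackrel{\mathcal{L}}{=}W_{\int_0^\cdot q_s^{-2}ds}$, is a cosmetic variant of what the paper does.
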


\begin{proof}
We show first that
$$\mathcal{L}\left(\left(\int_0^t \frac{dX_s}{\mathcal{E}(R)_{s-}}\right)_{t\geq 0}\,|\,\mathcal{E}(R)_s=q_s, s\geq 0 \right)= \mathcal{L}\left(\left(\int_0^t \frac{dX_s}{q_{s-}}\right)_{t\geq 0}\right)$$
\par To prove this equality in law we consider the representation of the stochastic integrals by Riemann sums (see
\cite{JSh}, Proposition I.4.44, p. 51). We recall that for any increasing sequence of stopping times $\tau=(T_n)_{n\in\mathbb{N}}$ with $T_0=0$ such that $\sup_n T_n=\infty$ and $T_n<T_{n+1}$ on the set $\{T_n<\infty\}$, Riemann approximation of the stochastic integral $\int_0^t \frac{dX_s}{\mathcal{E}(R)_{s-}}$ will be 
\begin{equation*}\label{eq4a}
\tau \left(\int_0^t \frac{dX_s}{\mathcal{E}(R)_{s-}}\right)= \sum_{n=0}^{\infty}\frac{1}{\mathcal{E}(R)_{T_n-}}\left( X_{T_{n+1}\wedge t} - X_{T_{n}\wedge t}\right)
\end{equation*}
The sequence $\tau_n= (T(n,m))_{m\in\mathbb{N}}$ of the adapted subdivisions is called Riemann sequence if $\sup_{m\in\mathbb{N}}(T(n,m+1)\wedge t-T(n,m)\wedge t)\rightarrow 0$ as $n\rightarrow \infty $ for all $t>0$. For our purposes we will take a deterministic Riemann sequences. Then, Proposition I.4.44, p.51 of \cite {JSh} says that for all $t>0$
\begin{equation}\label{eq5}
\tau_n \left(\int_0^t \frac{dX_s}{\mathcal{E}(R)_{s-}}\right)\stackrel{{\bf P}}{\longrightarrow}
\int_0^t \frac{dX_s}{\mathcal{E}(R)_{s-}}
\end{equation}
and
\begin{equation}\label{eq5a}
\tau_n \left(\int_0^t \frac{dX_s}{q_{s-}}\right)\stackrel{{\bf P}}{\longrightarrow}
\int_0^t \frac{dX_s}{q_{s-}}
\end{equation}
where $\stackrel{{\bf P}}{\longrightarrow}$ denotes the convergence in probability.
According to the Kolmogorov theorem, the law of the process is entirely defined by its finite-dimensional distributions. Let us take  for $k\geq 0$ a subdivision $t_0=0<t_1<t_2\cdots <t_k$
and a continuous bounded function $F:~\mathbb{R}^k\rightarrow~\mathbb{R}$, to prove by standard arguments that
$${\bf E}\left[ F\left(\tau_n\left(\int_0^{t_1} \frac{dX_s}{\mathcal{E}(R)_{s-}}\right), \cdots \tau_n\left(\int_0^{t_k} \frac{dX_s}{\mathcal{E}(R)_{s-}}\right)\right)\,|\,\mathcal{E}(R)_s=q_s, s\geq 0 \right]$$
$$= {\bf E}\left[ F\left(\tau_n\left(\int_0^{t_1} \frac{dX_s}{q_{s-}}\right), \cdots \tau_n\left(\int_0^{t_k} \frac{dX_s}{q_{s-}}\right) \right)\right]$$
Taking into account \eqref{eq5} and \eqref{eq5a}, we pass to the limit as $n\rightarrow \infty$ and we obtain
$${\bf E}\left[ F\left(\int_0^{t_1} \frac{dX_s}{\mathcal{E}(R)_{s-}}, \cdots \int_0^{t_k} \frac{dX_s}{\mathcal{E}(R)_{s-}}\right)\,|\,\mathcal{E}(R)_s=q_s, s\geq 0 \right]$$
$$\hspace{3cm}= {\bf E}\left[ F\left(\int_0^{t_1} \frac{dX_s}{q_{s-}}, \cdots \int_0^{t_k} \frac{dX_s}{q_{s-}}\right)\right]$$
and this proves the claim.
\par Using the decomposition \eqref{decx} we get that
\begin{equation*}
\begin{split}
\int_0^t \frac{dX_s}{q_{s-}} = a_X & \int_0^t \frac{ds}{q_{s}} + \sigma _X\int_0^t \frac{dW_s}{q_{s-}} \\
& + \int_0^t\int_{ |x| \leq 1}\frac{x}{q_{s-}} (\mu_X(ds,dx)-\nu _X(ds, dx)) \\
& + \int_0^t\int_{ |x|> 1}\frac{x}{q_{s-}} \,\mu_X(ds,dx).
\end{split}
\end{equation*}
We denote the last two terms in the r.h.s. of the equality above by $M_t^d(q)$ and $U_t(q)$ respectively. Recall that since $X$ is L\'{e}vy process the four processes appearing in the right-hand side of the above equality are independent. We use the well-known identity in law 
$$\left(\int_0^t \frac{dW_s}{q_{s-}} \right)_{t \geq 0}\stackrel{\mathcal{L}}{=}
\left(W_{\int_0^t \frac{ds}{q^2_{s}}} \right)_{t \geq 0}$$
to write
\begin{equation*}
\begin{split}
\left(a_X\int_0^t\frac{ds}{q_{s}}, \right. & \left. \sigma_X\int_0^t\frac{dW_s}{q_{s-}}, M^d_t(q),U_t(q)\right)_{t\geq0} \\
& \stackrel{\mathcal{L}}{=}\left(a_X\int_0^t\frac{ds}{q_{s}}, \sigma_XW_{\int_0^t\frac{ds}{q^2_{s}}}, M^d_t(q),U_t(q)\right)_{t\geq0}.
\end{split}
\end{equation*}
Then, we take the sum of these processes and we integrate w.r.t. the law of $\mathcal{E}(R)$. This yields the first result.

The proof of the second part is the same except we take the following decomposition of $X$:
$$X_t= \delta_X t+\sigma_X W_t+ \int_0^t\int_{\mathbb{R}}x(\mu_X(ds,dx)-\nu_X(dx)ds).$$
\end{proof}

The last ingredient in the proof of Theorem \ref{t1} are the Novikov maximal inequalities for compensated integrals with respect to random measures (see \cite{BJ}, \cite{Nov} and also \cite{mart}) which we will state below after introducing some notations. Let $f : (\omega, t, x) \mapsto f(\omega, t, x)$ be a left-continuous and measurable random function on $\Omega \times \mathbb{R_+} \times \mathbb{R}$. Specializing the notations of \cite{Nov} to our case, we say that $f \in F_2$ if, for almost all $\omega \in \Omega$,
$$\int_0^t\int_{\mathbb{R}}f(\omega, s, x)^2\nu_X(dx)ds < \infty.$$
If $f \in F_2$, we can define the compensated integral by
$$C_f(t) = \int_0^t\int_{\mathbb{R}}f(\omega, s, x)\left(\mu_X(ds, dx)-\nu_X(dx)ds\right)$$
for all $t \geq 0$. For these compensated integrals, we then have the following inequalities.
\begin{prop}[c.f. Theorem 1 in \cite{Nov}]\label{p8}
Let $f$ be a left-continuous measurable random function with $f \in F_2$. Let $C_f = (C_f(t))_{t \geq 0}$ be the compensated integral of $f$ as defined above. 
\begin{enumerate}
\item[(a)] For all $0 \leq \alpha \leq 2$,
$$\E\left(\sup_{0 \leq t \leq T}|C_f(t)|^{\alpha}\right) \leq K_1\E\left[\left(\int_0^T\int_{\mathbb{R}}f^2\nu_X(dx)ds\right)^{\alpha/2}\right].$$
\item[(b)] For all $\alpha \geq 2$,
\begin{equation*}
\begin{split}
\E\left(\sup_{0 \leq t \leq T}|C_f(t)|^{\alpha}\right) \leq K_2 & \E\left[\left(\int_0^T\int_{\mathbb{R}}|f|^2\nu_X(dx)ds\right)^{\alpha/2}\right] \\
& + K_3\E\left(\int_0^T\int_{\mathbb{R}}|f|^{\alpha}\nu_X(dx)ds\right)
\end{split}
\end{equation*}
\end{enumerate}
where $K_1 \geq 0$, $K_2 \geq 0$, and $K_3 \geq 0$ are constants depending only on $\alpha$ in an explicit way.
\end{prop}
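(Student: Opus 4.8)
The plan is to reduce both inequalities to the Burkholder--Davis--Gundy (BDG) inequality for the purely discontinuous local martingale $C_f$, and then to control the resulting moment of its quadratic variation by the two quantities on the right-hand side. First I would record the structure of $C_f$: since $f \in F_2$, the compensated integral $C_f$ is a locally square-integrable purely discontinuous local martingale whose jumps satisfy $\Delta C_f(s) = f(s, \Delta X_s)$, with quadratic variation and predictable quadratic variation
$$[C_f]_T = \int_0^T\int_{\mathbb{R}}f^2\,\mu_X(ds,dx), \qquad \langle C_f\rangle_T = \int_0^T\int_{\mathbb{R}}f^2\,\nu_X(dx)ds.$$
The BDG inequality then gives, for every $\alpha > 0$, a constant $c_\alpha$ with $\E(\sup_{0\le t\le T}|C_f(t)|^\alpha) \le c_\alpha\,\E([C_f]_T^{\alpha/2})$, so the whole task is to bound $\E([C_f]_T^{\alpha/2})$.

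For part (a), with $0 \le \alpha \le 2$, set $q = \alpha/2 \le 1$. The increasing process $[C_f]$ is dominated by its predictable compensator $\langle C_f\rangle$ in the sense of Lenglart, since $[C_f] - \langle C_f\rangle$ is a local martingale and hence $\E([C_f]_\tau) \le \E(\langle C_f\rangle_\tau)$ for bounded stopping times $\tau$. Lenglart's domination inequality for the concave power $q < 1$ then yields $\E([C_f]_T^{q}) \le \frac{2-q}{1-q}\E(\langle C_f\rangle_T^{q})$, which is exactly $K_1\,\E((\int_0^T\int f^2\nu_X ds)^{\alpha/2})$; the endpoint $\alpha = 2$ is the It\^o isometry. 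Combined with BDG this gives (a).

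For part (b), with $\alpha \ge 2$, set $q = \alpha/2 \ge 1$ and prove the following moment inequality for the increasing pure-jump process $D := [C_f]$ with compensator $A := \langle C_f\rangle$: namely $\E(D_T^q) \le C_q[\E(A_T^q) + \E(\sum_{s\le T}(\Delta D_s)^q)]$. To see this, write $D_T^q = \sum_{s\le T}[(D_{s-}+\Delta D_s)^q - D_{s-}^q]$ and use the elementary bound $(a+b)^q - a^q \le C_q(a^{q-1}b + b^q)$ valid for $a,b \ge 0$, $q \ge 1$. Taking expectations, I replace $dD$ by $dA$ in the predictable integral $\int_0^T D_{s-}^{q-1}dD_s$ (legitimate because $D_{s-}^{q-1}$ is predictable and $D - A$ is a martingale), bound $\int_0^T D_{s-}^{q-1}dA_s \le D_T^{q-1}A_T$, and apply Young's inequality $D_T^{q-1}A_T \le \epsilon D_T^q + C_\epsilon A_T^q$ to absorb the $\E(D_T^q)$ term into the left-hand side. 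Finally, $\Delta D_s = |\Delta C_f(s)|^2$ gives $\sum_{s\le T}(\Delta D_s)^q = \sum_{s\le T}|\Delta C_f(s)|^\alpha = \int_0^T\int |f|^\alpha\mu_X(ds,dx)$, whose expectation equals $\E(\int_0^T\int|f|^\alpha\nu_X(dx)ds)$ by the compensation formula, while $\E(A_T^q) = \E((\int_0^T\int f^2\nu_X ds)^{\alpha/2})$. Combining with BDG produces the two terms weighted by $K_2$ and $K_3$.

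The main obstacle is the absorbing step in (b): it is only valid once one knows a priori that $\E(D_T^q) = \E([C_f]_T^{\alpha/2})$ is finite, otherwise one merely obtains $\infty \le \infty$. I would handle this by a localization/truncation argument — first establishing the inequality for $f$ truncated so that $C_f$ is bounded (or along a reducing sequence of stopping times making $D_T^q$ integrable), with constants independent of the truncation, and then passing to the limit by monotone convergence. The same localization secures the integrability needed to replace $dD$ by $dA$ in expectation and to invoke Lenglart domination in part (a). Since the statement is quoted as Theorem 1 of \cite{Nov}, these technical points are precisely the ones carried out there; the route above reconstructs that argument through the BDG inequality, Lenglart's inequality, and the pure-jump moment lemma.
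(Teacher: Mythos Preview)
The paper does not give its own proof of this proposition: it is stated with the attribution ``c.f.\ Theorem~1 in \cite{Nov}'' and simply quoted as a known tool before being applied in the proof of Theorem~\ref{t1}. There is therefore nothing to compare your argument against in the paper itself.

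That said, your sketch is a sound reconstruction of how such inequalities are established. The route through the BDG upper bound for $\sup_t|C_f(t)|^\alpha$ in terms of $\E([C_f]_T^{\alpha/2})$, followed by Lenglart domination of $[C_f]$ by $\langle C_f\rangle$ for $\alpha/2\le 1$ in part~(a), and the telescoping/Young-inequality bound on $\E([C_f]_T^{\alpha/2})$ for $\alpha/2\ge 1$ in part~(b), is exactly the standard strategy. You have also correctly flagged the one genuine technical point: the absorption step in~(b) presupposes $\E([C_f]_T^{\alpha/2})<\infty$, which must be arranged by localization with constants independent of the localizing sequence before passing to the limit. One small remark: the Lenglart constant $\tfrac{2-q}{1-q}$ you quote blows up at $q=1$, so the endpoint $\alpha=2$ really does need the separate It\^o-isometry argument you mention rather than a limiting procedure. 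With that caveat, your outline matches the content of Novikov's original theorem, which is all the paper invokes.
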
 

\begin{proof}[Proof of Theorem \ref{t1}]
Note that
$$\hspace{-50mm}\sup_{0 \leq t \leq T}-(a_X I_t + \sigma_X W_{J_t}+M^d_t+U_t)$$
$$\leq |a_X|I_T + \sup_{0 \leq t \leq T}\sigma_X|W_{J_t}| + \sup_{0 \leq t \leq T}|M^d_t| + \sup_{0 \leq t \leq T}|U_t|,$$
and that for positive random variable $Z_1, Z_2, Z_3, Z_4$ we have
$$\hspace{-70mm}\{Z_1 + Z_2 + Z_3 + Z_4 > y\}$$ 
$$ \subseteq \left\{Z_1 > \frac{y}{4}\right\} \cup \left\{Z_2 > \frac{y}{4}\right\} \cup \left\{Z_3 > \frac{y}{4}\right\} \cup \left\{Z_4 > \frac{y}{4}\right\}.$$
Therefore, using Proposition \ref{l2}, we obtain 
\begin{equation*}
\begin{split}
{\bf P}(\tau(y) \leq T) = & {\bf P}\left(\sup_{0 \leq t \leq T}-(a_X I_t + \sigma_X W_{J_t}+M^d_t+U_t) > y\right) \\
\leq & {\bf P}\left(|a_X| I_T > \frac{y}{4}\right) + {\bf P}\left(\sup_{0 \leq t \leq T}\sigma_X| W_{J_t}|> \frac{y}{4}\right) \\
& + {\bf P}\left(\sup_{0 \leq t \leq T}|M^d_t| > \frac{y}{4}\right) + {\bf P}\left(\sup_{0 \leq t \leq T}|U_t| > \frac{y}{4}\right).
\end{split}
\end{equation*}
For the first term, using Markov's inequality, we obtain
$${\bf P}\left(|a_X| I_T > \frac{y}{4}\right) \leq \frac{4^{\alpha}|a_X|^{\alpha}}{y^{\alpha}}\E(I_T^{\alpha}).$$
For the second term, since $(J_t)_{0 \leq t \leq T}$ is increasing we can change the time in the supremum and condition on $(\mathcal{E}({R})_t)_{0 \leq t \leq T}$ to obtain
\begin{equation*}
\begin{split}
{\bf P}\left(\sup_{0 \leq t \leq T}\sigma_X| W_{J_t}|> \frac{y}{4}\right) & = {\bf P}\left(\sup_{0 \leq t \leq J_T}\sigma_X|W_t|> \frac{y}{4}\right) \\
& =\E\left[{\bf P}\left(\left.\sup_{0 \leq t \leq J_T}\sigma_X|W_t|> \frac{y}{4} \right| (\mathcal{E}({R})_t)_{0 \leq t \leq T}\right)\right]
\end{split}
\end{equation*}
Since $W$ and $R$ are independent, we obtain, using the reflection principle, the fact that $W_{\int_0^Tq_t^{-2}dt} \overset{\mathcal{L}}{=} \left(\int_0^Tq_t^{-2}dt\right)^{1/2}W_1$ and Markov's inequality, that
\begin{equation*}
\begin{split}
{\bf P}&\left(\left.\sup_{0 \leq t \leq J_T}\sigma_X|W_t| > \frac{y}{4} \right| \mathcal{E}({R})_t = q_t, 0 \leq t \leq T\right) \\
& = 2{\bf P}\left(\left(\int_0^Tq_t^{-2}dt\right)^{1/2}\sigma_X|W_1| > \frac{y}{4}\right) \\
& \leq 2\frac{4^{\alpha}\sigma_X^{\alpha}}{y^{\alpha}}\left(\int_0^Tq_t^{-2}dt\right)^{\alpha/2}\E(|W_1|^{\alpha}).
\end{split}
\end{equation*}
Then, since $\E(|W_1|^{\alpha}) = \frac{2^{\alpha/2}}{\sqrt{\pi}}\Gamma\left(\frac{\alpha + 1}{2}\right)$, we obtain
$${\bf P}\left(\sup_{0 \leq t \leq T}\sigma_X| W_{J_t}|> \frac{y}{4}\right) \leq \frac{2^{(5\alpha+2)/2}\Gamma\left(\frac{\alpha + 1}{2}\right)\sigma_X^{\alpha}}{\sqrt{\pi}y^{\alpha}}\E(J_T^{\alpha/2}).$$
Note that the inequalities for the first two terms work for all $\alpha > 0$.

\par Suppose now that $0 < \alpha \leq 1$. We see that $\mathcal{E}(R)^{-1}_{t-}(\omega)x\ind_{\{|x| \leq 1\}} \in F_2$. Therefore, using Markov's inequality and part (a) of Proposition \ref{p8}, we obtain
\begin{equation*}
\begin{split}
& {\bf P}\left(\sup_{0 \leq t \leq T}|M^d_t| > \frac{y}{4}\right) \leq \frac{4^{\alpha}}{y^{\alpha}}\E\left(\sup_{0 \leq t \leq T}|M^d_t|^{\alpha}\right) \\
& \leq K_1 \frac{4^{\alpha}}{y^{\alpha}}\E\left[\left(\int_0^T\int_{\mathbb{R}}\frac{x^2}{\mathcal{E}(R)^2_{s-}}\ind_{\{|x| \leq 1\}}\nu_X(dx)ds\right)^{\alpha/2}\right] \\
& = K_1 \frac{4^{\alpha}}{y^{\alpha}}\left(\int_{\mathbb{R}}x^2\ind_{\{|x| \leq 1\}}\nu_X(dx)\right)^{\alpha/2}\E(J_T^{\alpha/2}).
\end{split}
\end{equation*}
For the last term, note that since $0 < \alpha \leq 1$, we have $\left(\sum_{i = 1}^Nx_i\right)^{\alpha} \leq \sum_{i = 1}^Nx_i^{\alpha}$, for $x_i \geq 0$ and $N \in \mathbb{N}^*$ and, for each $t \geq 0$,
\begin{equation*}
\begin{split}
|U_t|^{\alpha} & \leq \left(\sum_{0 < s \leq t}\mathcal{E}(R)^{-1}_{s-}|\Delta X_s|\ind_{\{|\Delta X_s| > 1\}}\right)^{\alpha} \\
& \leq \sum_{0 < s \leq t}\mathcal{E}(R)^{-\alpha}_{s-}|\Delta X_s|^{\alpha}\ind_{\{|\Delta X_s| > 1\}} \\
& = \int_0^t\int_{\mathbb{R}}\mathcal{E}(R)^{-\alpha}_{s-}|x|^{\alpha}\ind_{\{|x| > 1\}}\mu_X(ds,dx).
\end{split}
\end{equation*}
Therefore, using Markov's inequality and the compensation formula (see e.g. Theorem II.1.8 p.66-67 in \cite{JSh}), we obtain
\begin{equation*}
\begin{split}
{\bf P}\left(\sup_{0 \leq t \leq T}|U_t|\right. & \left. > \frac{y}{4}\right) \leq \frac{4^{\alpha}}{y^{\alpha}}\E\left(\sup_{0 \leq t \leq T}|U_t|^{\alpha}\right) \\
& \leq \frac{4^{\alpha}}{y^{\alpha}}\E\left(\sup_{0 \leq t \leq T}\int_0^t\int_{\mathbb{R}}\mathcal{E}(R)^{-\alpha}_{s-}|x|^{\alpha}\ind_{\{|x| > 1\}}\mu_X(ds,dx)\right) \\
& = \frac{4^{\alpha}}{y^{\alpha}}\E\left(\int_0^T\int_{\mathbb{R}}\mathcal{E}(R)^{-\alpha}_{s-}|x|^{\alpha}\ind_{\{|x| > 1\}}\nu_X(dx)ds\right) \\
& = \frac{4^{\alpha}}{y^{\alpha}}\left(\int_{\mathbb{R}}|x|^{\alpha}\ind_{\{|x| > 1\}}\nu_X(dx)\right)\E(J_T(\alpha)).
\end{split}
\end{equation*}
This finishes the proof when $0 < \alpha \leq 1$.

\par Suppose now that $1 < \alpha \leq 2$. The bound for ${\bf P}\left(\sup_{0 \leq t \leq T}|M^d_t| > \frac{y}{4}\right)$ can be obtained in the same way as in the previous case. Applying H\"{o}lder's inequality we obtain
\begin{equation*}
\begin{split}
|U_t|^{\alpha} \leq & \left(\int_0^t\int_{\mathbb{R}}\mathcal{E}(R)^{-1/\alpha}_{s-}\mathcal{E}(R)^{1/\alpha-1}_{s-}|x|\ind_{\{|x| > 1\}}\mu_X(ds, dx)\right)^{\alpha} \\
\leq & \left(\int_0^t\int_{\mathbb{R}}\mathcal{E}(R)^{-1}_{s-}|x|^{\alpha}\ind_{\{|x| > 1\}}\mu_X(ds, dx)\right) \\
& \times \left(\int_0^t\int_{\mathbb{R}}\mathcal{E}(R)^{-1}_{s-}\ind_{\{|x| > 1\}}\mu_X(ds, dx)\right)^{\alpha-1} \\
\leq & \left(\int_0^t\int_{\mathbb{R}}\mathcal{E}(R)^{-1}_{s-}|x|^{\alpha}\ind_{\{|x| > 1\}}\mu_X(ds, dx)\right)^{\alpha}.
\end{split}
\end{equation*}
Then, using Markov's inequality and the compensation formula, we obtain
\begin{equation*}
\begin{split}
{\bf P}\left(\sup_{0 \leq t \leq T}|U_t| > \frac{y}{4}\right) & \leq \frac{4^{\alpha}}{y^{\alpha}}\E\left(\sup_{0 \leq t \leq T}|U_t|^{\alpha}\right) \\
& = \left(\int_{\mathbb{R}}|x|^{\alpha}\ind_{\{|x| > 1\}}\nu_X(dx)\right)^{\!\alpha}\E(I_T^{\alpha}).
\end{split}
\end{equation*}
This finishes the proof in the case $1 < \alpha \leq 2$.

Finally, suppose that $\alpha \geq 2$. The estimation for ${\bf P}\left(\sup_{0 \leq t \leq T}|U_t| > \frac{y}{4}\right)$ still works in this case. Moreover, since $\mathcal{E}(R)^{-1}_{t-}(\omega)x\ind_{\{|x| \leq 1\}} \in F_2$, we obtain, applying part (b) of Proposition \ref{p8} that
\begin{equation*}
\begin{split}
{\bf P}\left(\sup_{0 \leq t \leq T}|M^d_t| > \frac{y}{4}\right) \leq & K_2 \E\left[\left(\int_0^T\int_{\mathbb{R}}\mathcal{E}(R)^{-2}_{s-}x^2\ind_{\{|x| \leq 1\}}\nu_X(dx)ds\right)^{\alpha/2}\right] \\
& + K_3\E\left(\int_0^T\int_{\mathbb{R}}\mathcal{E}(R)^{-\alpha}_{s-}|x|^{\alpha}\ind_{\{|x| \leq 1\}}\nu_X(dx)ds\right) \\
= & K_2 \left(\int_{\mathbb{R}}x^2\ind_{\{|x| \leq 1\}}\nu_X(dx)\right)^{\alpha/2}\E(J_T^{\alpha/2}) \\
& + K_3 \left(\int_{\mathbb{R}}|x|^{\alpha}\ind_{\{|x| \leq 1\}}\nu_X(dx)\right)\E(J_T(\alpha)).
\end{split}
\end{equation*}
Note that the right-hand side is finite since $|x|^{\alpha}\ind_{\{|x| \leq 1\}} \leq |x|^2\ind_{\{|x| \leq 1\}}$ when $\alpha \geq 2$. This finishes the proof.
\end{proof} 

\end{section}

\begin{section}{Asymptotic lower bound}

In this section, we prove Theorem 2 and, therefore, show that the upper bound obtained in Theorem \ref{t1} is asymptotically optimal for a large class of L\'{e}vy processes $X$. We start with some preliminary results. Denote $x^{+,p} = (\max(x,0))^{p}$, for all $x \in \mathbb{R}$ and $p > 0$.

\begin{lem}\label{lem_aysmZ}
Suppose that a random variable $Z > 0$ $({\bf P}-a.s.)$ satisfies $\E(Z^p) = \infty$, for some $p > 0$. Then, for all $\delta > 0$, there exists a positive numerical sequence $(y_n)_{n \in \mathbb{N}}$ increasing to $+\infty$ such that, for all $C > 0$, there exists $n_0 \in \mathbb{N}$ such that for all $n \geq n_0$,
$${\bf P}(Z \geq y_n) \geq \frac{C}{y_n^{p}\ln(y_n)^{1+\delta}}.$$ 
\end{lem}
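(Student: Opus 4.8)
The plan is to reduce the statement to the single claim that
\[
\limsup_{y\to\infty}\, y^{p}\,(\ln y)^{1+\delta}\,{\bf P}(Z\geq y)=+\infty .
\]
Indeed, if this holds, I can extract a strictly increasing sequence $(y_n)_{n\in\mathbb{N}}$ with $y_n\to+\infty$ along which $y_n^{p}(\ln y_n)^{1+\delta}\,{\bf P}(Z\geq y_n)\to+\infty$ (for each $k$ pick $y$ with $y\geq k$ and $y^{p}(\ln y)^{1+\delta}{\bf P}(Z\geq y)\geq k$, then thin out to make the sequence strictly increasing). Given any $C>0$, there is then $n_0$ such that this quantity exceeds $C$ for all $n\geq n_0$, which is precisely the asserted bound ${\bf P}(Z\geq y_n)\geq C/(y_n^{p}(\ln y_n)^{1+\delta})$. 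So the whole lemma follows once the $\limsup$ is shown to be infinite.

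To establish the $\limsup$ statement I would argue by contradiction. Suppose it equals a finite number $M\geq 0$. Then there exists $y_0>1$ with ${\bf P}(Z\geq y)\leq (M+1)/(y^{p}(\ln y)^{1+\delta})$ for all $y\geq y_0$. Using the layer-cake (tail) formula for moments,
\[
\E(Z^{p})=\int_0^{\infty} p\,y^{p-1}\,{\bf P}(Z> y)\,dy ,
\]
together with ${\bf P}(Z>y)\leq {\bf P}(Z\geq y)$, I split the integral at $y_0$. The contribution of $[0,y_0]$ is finite, while the contribution of $[y_0,\infty)$ is at most
\[
p(M+1)\int_{y_0}^{\infty}\frac{dy}{y\,(\ln y)^{1+\delta}}
= p(M+1)\int_{\ln y_0}^{\infty}\frac{du}{u^{1+\delta}}<\infty ,
\]
by the substitution $u=\ln y$ and because $\delta>0$ and $\ln y_0>0$. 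Hence $\E(Z^{p})<\infty$, contradicting the hypothesis $\E(Z^{p})=\infty$. This forces the $\limsup$ to be $+\infty$ and completes the argument.

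I do not expect a genuine obstacle here: the proof is elementary. The only points needing a little care are the correct handling of the nested quantifiers in the reduction step — in particular making sure the extracted sequence can be taken strictly increasing and unbounded, which costs nothing — and the (standard, Tonelli-based) justification of the tail formula for $\E(Z^{p})$. The hypothesis $Z>0$ $({\bf P}\text{-a.s.})$ is not actually used in this argument; it merely ensures $\ln y_n$ is meaningful for the (large) values $y_n$ that occur, which is automatic.
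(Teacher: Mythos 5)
Your proof is correct and rests on the same two ingredients as the paper's: the tail (layer-cake) formula for $\E(Z^{p})$ and the convergence of $\int^{\infty}\frac{du}{u(\ln u)^{1+\delta}}$; the only difference is that you argue by contraposition (a finite $\limsup$ would force $\E(Z^{p})<\infty$), whereas the paper argues directly that $\sup_{u\geq y}\bigl[u^{p}(\ln u)^{1+\delta}{\bf P}(Z\geq u)\bigr]$ must be infinite for every $y$. This is essentially the same approach, and your handling of the quantifiers and the extraction of the sequence is sound.
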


\begin{proof}
If $Z > 0$ $({\bf P}-a.s.)$ is a random variable and $g:\mathbb{R_+} \to \mathbb{R_+}$ is a function of class $C^1$ with positive derivative, then, using Fubini's theorem, we obtain
$$g(0) + \int_0^{\infty}g'(u) {\bf P}(Z \geq u) du = g(0) + \E\left(\int_0^Zg'(u)du\right) = \E(g(Z)).$$ 
Applying this to the function $g(z) = z^p$ with $p>0$ we obtain, for all $y \geq e$,
$$\int_y^{\infty}u^{p-1} {\bf P}(Z \geq u) du = \infty.$$
Moreover, for all $\delta > 0$,
$$\sup_{u \geq y}[u^p\ln(u)^{1+\delta}{\bf P}(Z \geq u)]\int_y^{\infty}\frac{du}{u\ln(u)^{1+\delta}} \geq \int_y^{\infty}u^{p-1} {\bf P}(Z \geq u) du.$$
So, since $\int_y^{\infty}\frac{du}{u\ln(u)^{1+\delta}} < \infty$, we obtain, for all $y \geq e$,
$$\sup_{u \geq y}[u^p\ln(u)^{1+\delta}{\bf P}(Z \geq u)] = \infty.$$
Therefore, there exists a numerical sequence $(y_n)_{n \in \mathbb{N}}$ increasing to $+\infty$ such that,
$$\lim_{n \to \infty}y_n^p\ln(y_n)^{1+\delta}{\bf P}(Z \geq y_n) = +\infty.$$
\end{proof}

\begin{lem}\label{lem_sumXY}
Assume that $X$ and $Y$ are independent random variables with $\E(Y) = 0$. Assume that $p \geq 1$. Then, $\E[X^{+,p}] \leq \E[(X+Y)^{+,p}]$.
\end{lem}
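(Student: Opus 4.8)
The plan is to prove the statement $\E[X^{+,p}] \le \E[(X+Y)^{+,p}]$ by conditioning on $X$ and invoking Jensen's inequality, using the convexity of the map $z \mapsto z^{+,p} = (\max(z,0))^p$ for $p \ge 1$. First I would observe that since $X$ and $Y$ are independent, the conditional law of $X+Y$ given $X=x$ is simply the law of $x+Y$, so that
\begin{equation*}
\E[(X+Y)^{+,p} \mid X = x] = \E[(x+Y)^{+,p}].
\end{equation*}
The function $\varphi(z) = (\max(z,0))^p$ is convex on $\mathbb{R}$ for $p \ge 1$ (it is the composition of the convex nondecreasing function $u \mapsto u^p$ on $[0,\infty)$ with the convex function $z \mapsto \max(z,0)$, extended by $0$; one checks convexity directly). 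Hence, by the conditional Jensen inequality applied to the random variable $x + Y$ whose mean is $x + \E(Y) = x$,
\begin{equation*}
\E[(x+Y)^{+,p}] = \E[\varphi(x+Y)] \ge \varphi\big(\E(x+Y)\big) = \varphi(x) = x^{+,p}.
\end{equation*}

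Next I would take expectations over $X$: integrating the pointwise inequality $\E[(x+Y)^{+,p}] \ge x^{+,p}$ against the law of $X$ and using the tower property gives
\begin{equation*}
\E[(X+Y)^{+,p}] = \E\big[\E[(X+Y)^{+,p} \mid X]\big] \ge \E[X^{+,p}],
\end{equation*}
which is exactly the claim. A minor point worth addressing for rigor is the case where $\E[X^{+,p}] = +\infty$: then the inequality is trivially an equality-or-exceeds statement only if the right side is also infinite, but this follows since the pointwise bound $\E[(x+Y)^{+,p}] \ge x^{+,p}$ holds for every $x$ (allowing the value $+\infty$ when the left side diverges), so integrating still yields $\E[(X+Y)^{+,p}] \ge \E[X^{+,p}] = +\infty$. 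Thus no integrability hypothesis on $X$ is needed.

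I do not expect any serious obstacle here; the only thing to be careful about is justifying the use of conditional Jensen without assuming $(x+Y)^{+,p}$ is integrable — which is handled by noting Jensen's inequality for convex functions holds in $[0,+\infty]$ for nonnegative convex functions such as $\varphi$, or alternatively by a truncation argument (apply the bounded convex functions $\varphi \wedge M$, pass to the limit via monotone convergence). The independence of $X$ and $Y$ is used precisely once, to identify the conditional distribution; the hypothesis $\E(Y)=0$ is used precisely once, to make the Jensen barycenter equal to $x$; and $p \ge 1$ is used precisely once, for the convexity of $\varphi$.
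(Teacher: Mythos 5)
Your proof is correct and follows essentially the same route as the paper: both apply Jensen's inequality to the convex function $z \mapsto (x+z)^{+,p}$ (equivalently, to $\varphi$ evaluated at $x+Y$ with barycenter $x$) for each fixed $x$, and then integrate against the law of $X$ using independence. The extra remarks on the infinite-expectation case and on justifying Jensen without integrability are sound but not needed beyond what the paper does.
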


\begin{proof}
For each $x \in \mathbb{R}$, we define the function $h_x : y \mapsto (x+y)^{+,p}$ on $\mathbb{R}$. Since $p \geq 1$, $h_x$ is a convex function and we obtain, using Jensen's inequality, that for each $x \in \mathbb{R}$,
$$\E[(x + Y)^{+,p}] = \E[h_x(Y)] \geq h_x(\E(Y)) = h_x(0) = x^{+,p}.$$
We obtain the desired result by integrating w.r.t. the law of $X$.
\end{proof}

\begin{lem}\label{lem_lowB}
Let $T > 0$. Assume that $a < 0$ or $\sigma > 0$ and that there exists $\beta > 0$ such that $\E(I_T^{\beta}) = \infty$. Then, $\E[(-a I_T - \sigma W_{J_T})^{+, \beta}] = \infty$.
\end{lem}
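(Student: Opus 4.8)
The plan is to reduce the claim to the one-dimensional moment blow-up already captured by $\E(I_T^\beta) = \infty$, handling the two alternative hypotheses ($a<0$ or $\sigma>0$) separately. First consider the case $a < 0$. Conditionally on the path of $\mathcal{E}(R)$, the random variable $-\sigma W_{J_T}$ is centered (it is $\sigma(\int_0^T q_t^{-2}dt)^{1/2}W_1$ with $W_1$ independent of $\mathcal{E}(R)$, hence mean zero), while $-aI_T > 0$ is measurable w.r.t. $\mathcal{E}(R)$. So I would apply Lemma \ref{lem_sumXY} conditionally: with $X := -aI_T$ and $Y := -\sigma W_{J_T}$, independence of $W$ and $R$ gives, after conditioning on $(\mathcal{E}(R)_t)_{0\le t\le T}$ and then integrating, that
$$\E[(-aI_T - \sigma W_{J_T})^{+,\beta}] \geq \E[(-aI_T)^{+,\beta}] = (-a)^\beta\,\E(I_T^\beta) = \infty,$$
using $\beta \geq 1$, which holds since we are in the regime $\beta_T \geq 1$ of Theorem \ref{t2}. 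Actually, to be safe about whether $\beta \geq 1$ is available here, I note that Lemma \ref{lem_sumXY} requires $p \geq 1$; if $\beta < 1$ must be covered, one instead uses that for $a<0$ the term $-aI_T$ is already nonnegative and one can bound $(-aI_T-\sigma W_{J_T})^{+,\beta}$ from below on the event $\{\sigma W_{J_T} \leq 0\}$, which has probability $\tfrac12$ by symmetry of $W$ conditionally on $\mathcal{E}(R)$, giving $\E[(-aI_T-\sigma W_{J_T})^{+,\beta}] \geq \tfrac12(-a)^\beta\E(I_T^\beta) = \infty$. The symmetry argument is cleaner and avoids the convexity restriction, so that is the route I would actually write.

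For the case $\sigma > 0$ (with possibly $a \geq 0$), the positive part can no longer be controlled by the drift term alone, so I would argue directly from the Gaussian tail. Conditionally on $(\mathcal{E}(R)_t)_{0\le t\le T}$, write $Q := \int_0^T q_t^{-2}\,dt = J_T$ and note $-\sigma W_{J_T} \overset{\mathcal L}{=} \sigma Q^{1/2}W_1$. Then, restricting to the event $\{W_1 \leq -1\}$ (say), one has $-aI_T - \sigma Q^{1/2}W_1 \geq \sigma Q^{1/2} - |a|I_T$ on that event... but this mixes $I_T$ and $J_T$, so instead I would use the elementary inequality $(x+y)^{+,\beta} \geq 2^{-\beta}(x^{+,\beta} - |y|^\beta)$ or, more simply, lower-bound by integrating only over a region of $W_1$ where the Gaussian term dominates. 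The cleanest device: since $I_T \leq \sqrt{T}\sqrt{J_T}$ by Cauchy–Schwarz (as in Lemma \ref{lem_csInt}), on the conditional law we have $-aI_T - \sigma W_{J_T} \geq -(\,|a|\sqrt T + \sigma|W_1|\,)\sqrt{J_T}$ is the wrong direction; rather I bound below by choosing $W_1$ negative enough. Concretely, condition on $\mathcal{E}(R)$, let $c := |a|\sqrt T/\sigma$, and restrict to $\{W_1 \le -(c+1)\}$; on that event $-aI_T - \sigma W_{J_T} = -aI_T + \sigma\sqrt{J_T}\,|W_1| \geq \sigma\sqrt{J_T}(|W_1| - c) \geq \sigma\sqrt{J_T}$, wait — this needs $-aI_T \geq -|a|\sqrt{T}\sqrt{J_T}$, i.e. $aI_T \le |a|\sqrt T\sqrt{J_T}$, which is automatic. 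Hence
$$\E[(-aI_T-\sigma W_{J_T})^{+,\beta}] \geq \sigma^\beta\,\E(J_T^{\beta/2})\,\P(W_1 \le -(c+1)).$$
Now $\P(W_1 \le -(c+1)) > 0$ is a fixed constant, and $\E(J_T^{\beta/2}) \geq T^{-\beta/2}\E(I_T^\beta) = \infty$ again by the Cauchy–Schwarz bound $I_T \le \sqrt T\sqrt{J_T}$. This closes the $\sigma>0$ case.

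The main obstacle is the bookkeeping in the $\sigma > 0$ case: one must combine the two exponential functionals $I_T$ and $J_T$ correctly so that the blow-up of $\E(I_T^\beta)$ transfers to $\E(J_T^{\beta/2})$, and one must make sure the restriction on $W_1$ is chosen so that, path-by-path in $\mathcal{E}(R)$, the Gaussian contribution genuinely dominates the (possibly positive) drift term $-aI_T$ — this is why the Cauchy–Schwarz comparison $I_T \le \sqrt{T}\sqrt{J_T}$, together with conditioning on $\mathcal{E}(R)$ and the self-scaling property $W_{J_T} \overset{\mathcal L}{=} \sqrt{J_T}\,W_1$, is the crucial step. Everything else is a routine application of Lemma \ref{lem_sumXY} (or the symmetry of $W$) together with Fubini/Tonelli to swap the conditional expectation over $W$ with the integration over the law of $\mathcal{E}(R)$.
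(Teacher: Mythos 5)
Your proof is correct and uses essentially the same ingredients as the paper's: conditioning on $(\mathcal{E}(R)_t)_{0\le t\le T}$, the scaling identity $W_{J_T}\overset{\mathcal{L}}{=}\sqrt{J_T}\,W_1$, the Cauchy--Schwarz comparison $I_T\le\sqrt{T}\sqrt{J_T}$ to transfer $\E(I_T^{\beta})=\infty$ between the two functionals, and restriction to a $W_1$-event of fixed positive probability so that the Gaussian term dominates the possibly adverse drift. The only differences are cosmetic --- you split into the cases $a<0$ and $\sigma>0$ rather than the paper's three cases, and in the mixed case you retain $\sigma^{\beta}J_T^{\beta/2}$ where the paper retains $((C-1)|a|I_T)^{\beta}$ --- and your symmetry argument in the $a<0$ case correctly sidesteps the $\beta\ge 1$ restriction of Lemma \ref{lem_sumXY}, which the paper also avoids (it never invokes that lemma here).
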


\begin{proof}
Suppose first that $a < 0$ and $\sigma = 0$. Then, 
$$\E[(-a I_T - \sigma W_{J_T})^{+, \beta}] = |a|^{\beta}\E(I_T^{\beta}) = \infty.$$

Next, suppose that $a \leq 0$ and $\sigma > 0$. In that case, using the  identities in law $W\overset{\mathcal{L}}{=}-W$  and $W_{J_T} \overset{\mathcal{L}}{=} \sqrt{J_T}W_1$, the Cauchy-Schwarz inequality and the conditional independence  between $W_1$ and $J_T$ given $\mathcal{E}(R)$, we obtain
\begin{equation*}
\begin{split}
\E[(-a I_T - \sigma W_{J_T})^{+, \beta}] & \geq \E[(\sigma\sqrt{J_T}W_1)^{+,\beta}] = \sigma^{\beta}\E(W_1^{+,\beta})\E(J_T^{\beta/2}) \\
& \geq \sigma^{\alpha}\E(W_1^{+,\beta})T^{-\beta/2}\E(I_T^{\beta}) = \infty.
\end{split}
\end{equation*}

Finally, if $a > 0$ and $\sigma > 0$, using the fact that $W\overset{\mathcal{L}}{=}-W$, that $W_{J_T} \overset{\mathcal{L}}{=} \sqrt{J_T}W_1$ and choosing $C > 1$, we obtain that
\begin{equation*}
\begin{split}
\E[(-a I_T - \sigma W_{J_T})^{+, \beta}] & = \E[(-|a| I_T + \sigma \sqrt{J_T} W_1)^{+,\beta}] \\
& \geq \E[(-|a| I_T + \sigma \sqrt{J_T} W_1)^{+,\beta}\ind_{\{\sigma \sqrt{J_T}W_1 \geq C|a|I_T\}}] \\
& \geq \E[((C-1)|a|I_T)^{\beta}\ind_{\{\sigma \sqrt{J_T}W_1 \geq C|a|I_T\}}].
\end{split}
\end{equation*}
Since $\frac{I_T}{\sqrt{J_T}} \leq \sqrt{T}$, by Cauchy-Schwarz's inequality, we obtain using the conditional independence  between $W_1$ and $I_T$ given $\mathcal{E}(R)$
\begin{equation*}
\begin{split}
\E[(-a I_T - \sigma W_{J_T})^{+, \beta}] & \geq \E\left[((C-1)|a|I_T)^{\beta}\ind_{\left\{W_1 \geq \frac{C|a|\sqrt{T}}{\sigma }\right\}}\right] \\
& = {\bf P}\left(W_1 \geq \frac{C|a|\sqrt{T}}{\sigma }\right)(C-1)^{\beta}|a|^{\beta}\E(I_T^{\beta}) = \infty.
\end{split}
\end{equation*}
\end{proof}

\begin{proof}[Proof of Theorem \ref{t2}]
The assumptions imply  $\int_{|x| > 1}|x|\nu_X(dx) <~+\infty$ and so, by Proposition \ref{l2}, we obtain
$${\bf P}\left(\sup_{0 \leq t \leq T}\left(-\int_0^t \frac{dX_s}{\mathcal{E}(R)_{s-}}\right) \geq y\right) \geq {\bf P}((-\delta_X I_T - \sigma_X W_{J_T} - N^d_T)^{+} \geq y),$$
where $\delta_X$ and $N^d = (N^d_t)_{t \in [0, T]}$ are defined as in Proposition \ref{l2}.

Then, by independence, we obtain
\begin{equation*}
\begin{split}
\E[(&-\delta_X I_T - \sigma_X W_{J_T} - N^d_T)^{+,\beta_T}] \\
& = \int_D\E[(-\delta_X I_T(q) - \sigma_X W_{J_T(q)} - N^d_T(q))^{+,\beta_T}]{\bf P}(\mathcal{E}(R) \in dq),
\end{split}
\end{equation*}
where $D$ is the Skorokhod space of c\`{a}dl\`{a}g functions on $[0,T]$, the measure ${\bf P}(\mathcal{E}(R) \in dq)$ is the law of $(\mathcal{E}(R)_t)_{t \in [0,T]}$, $I_T(q) = \int_0^T\frac{ds}{q_s}$, $J_T(q) = \int_0^T\frac{ds}{q^2_s}$ and
\begin{equation*}
\begin{split}
N^d_T(q) = \int_0^T\int_{|x| \leq 1}&\frac{x}{q_{s-}}(\mu_X(ds,dx)-\nu_X(dx)ds) \\
& + \int_0^T\int_{|x| > 1}\frac{x}{q_{s-}}(\mu_X(ds,dx)-\nu_X(dx)ds).
\end{split}
\end{equation*}

Denote by $N'_T(q)$ and $N''_T(q)$ the two terms on the r.h.s. of the equation above. Fixing $q \in D$, we now prove that $\E(N'_T(q)) = 0$ and $\E(N''_T(q)) = 0$. First, note that by Theorem 1 p.176 in \cite{LSh} and Theorem II.1.8 p.66-67 in \cite{JSh}, we find that
\begin{equation*}
\begin{split}
\E([N'_.(q), N'_.(q)]_T) & = \E\left(\int_0^T\int_{|x| \leq 1}\frac{x^2}{q_{s-}^2}\mu_X(ds, dx)\right) \\
& = \E\left(\int_0^T\int_{|x| \leq 1}\frac{x^2}{q_s^2}\nu_X(dx)ds\right) \\
& = \left(\int_0^T\frac{ds}{q^2_s}\right)\left(\int_{|x| \leq 1}x^2\nu_X(dx)\right).
\end{split}
\end{equation*}
Then, since $q$ a strictly positive c\`adl\`ag function on a compact interval, it is bounded with $\int_0^T\frac{ds}{q^2_s} < +\infty$ and since $\int_{|x| \leq 1}x^2\nu_X(dx) < +\infty$ by definition of the L\'{e}vy measure, we have $\E([N'_.(q),N'_.(q)]_T) < +\infty$. This shows that $N'(q)$ is a (square integrable) martingale and so $\E(N'_T(q)) = 0$.
For the second term, similarly we have
$$\int_0^T\int_{|x| > 1}\frac{|x|}{q_s}\nu_X(dx)ds = \left(\int_0^T\frac{ds}{q_s}\right)\left(\int_{|x| > 1}|x|\nu_X(dx)\right) < +\infty.$$
Therefore, by Proposition II.1.28 p.72 in \cite{JSh} and Theorem II.1.8 p.66-67 in \cite{JSh}, we have
\begin{equation*}
\begin{split}
\E(N''_T(q)) = \E\left(\int_0^T\right. & \left.\int_{|x| > 1}\frac{x}{q_{s-}}\mu_X(ds, dx)\right) \\
& - \E\left(\int_0^T\int_{|x| > 1}\frac{x}{q_{s-}}\nu_X(dx)ds\right) = 0.
\end{split}
\end{equation*}

Now, since the random variables $-\delta_X I_T(q) - \sigma_X W_{J_T(q)}$ and $-N^d_T(q)$ are independent and $\E(N_T^d(q)) = 0$, for all $q \in D$, we can apply Lemma \ref{lem_sumXY} to obtain 
$$\E[(-\delta_X I_T - \sigma_X W_{J_T} - N^d_T)^{+,\beta_T}] \geq \E[(-\delta_X I_T - \sigma_X W_{J_T})^{+,\beta_T}].$$
Then, using Lemma \ref{lem_aysmZ} and Lemma \ref{lem_lowB} with $a = \delta_X$ and $\sigma = \sigma_X$, we can conclude that for all $\delta > 0$, there exists a strictly positive sequence $(y_n)_{n \in \mathbb{N}}$ increasing to $+\infty$ such that, for all $C > 0$, there exists $n_0 \in \mathbb{N}$ such that, for all $n \geq n_0$,
$${\bf P}(\tau(y_n) \leq T) \geq \frac{C}{y_n^{\beta_T}\ln(y_n)^{1+\delta}}.$$

For the second part, note that the above implies that
$$\limsup_{y \to \infty}\frac{\ln\left({\bf P}(\tau(y) \leq T)\right)}{\ln(y)} \geq -\beta_T + \lim_{n \to \infty}\frac{\ln(C) - \ln(\ln(y_n)^{1+\delta})}{\ln(y_n)} = -\beta_T.$$
Now, using Theorem \ref{t1}, we obtain 
$$\limsup_{y \to \infty}\frac{\ln({\bf P}(\tau(y) \leq T))}{\ln(y)} \leq -\alpha,$$
for all $\alpha < \beta_T$, and letting $\alpha \to \beta_T$, we obtain
$$\limsup_{y \to \infty}\frac{\ln\left({\bf P}(\tau(y) \leq T)\right)}{\ln(y)} \leq -\beta_T,$$
and, hence, the claimed equality.
\end{proof}
\end{section}

\begin{section}{Conditions for Ruin with Probability 1}\label{s0}

In this section, after giving a simple result about the limits of the exponential functionals, we prove Theorem \ref{t3}. Then, we state explicit condition on the characteristics of $R$ for ruin with probability one and apply it to the L\'evy case.

\begin{lem}\label{lem_limfracR}
Assume that $\lim_{t \to \infty}\frac{\hat{R}_t}{t} = \mu < 0$ $({\bf P}-a.s.)$. Then,
$$\lim_{t \to \infty}I_t = +\infty \text{ and } \lim_{t \to \infty}J_t = +\infty  \hspace{2mm} ({\bf P}-a.s.).$$
\end{lem}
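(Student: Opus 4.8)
The plan is to argue pathwise and reduce everything to the trivial lower bound that the integrands are eventually $\geq 1$. Fix $\omega$ in the event of full probability on which $\hat{R}_t/t \to \mu$ as $t \to \infty$. Since $\mu < 0$, applying the definition of this limit with $\varepsilon = |\mu|/2 > 0$ gives $s_0 = s_0(\omega) \geq 0$ such that $\hat{R}_s \leq (\mu/2)\,s$ for all $s \geq s_0$; in particular $\hat{R}_s \to -\infty$, so $\hat{R}_s < 0$, hence $e^{-\hat{R}_s} \geq 1$ and $e^{-2\hat{R}_s} \geq 1$, for all $s \geq s_0$.

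Then, for every $t \geq s_0$,
$$I_t = \int_0^t e^{-\hat{R}_s}\,ds \geq \int_{s_0}^t e^{-\hat{R}_s}\,ds \geq \int_{s_0}^t 1\,ds = t - s_0,$$
and likewise $J_t \geq t - s_0$ by the same computation with $e^{-2\hat{R}_s}$ in place of $e^{-\hat{R}_s}$. Letting $t \to \infty$ gives $I_t \to +\infty$ and $J_t \to +\infty$ on the chosen event, which has probability one; this is the claim. (If a sharper rate is wanted one can instead use $\hat{R}_s \leq (\mu/2)s$ directly, since $\int_{s_0}^t e^{|\mu|s/2}\,ds$ already diverges, but the bound by $1$ suffices.)

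I expect no genuine obstacle here: the only point to be careful about is that a single $\mathbf{P}$-null set — namely the set on which the hypothesis $\hat{R}_t/t \to \mu$ fails — serves for both conclusions simultaneously, and that $s_0$ depends on $\omega$ but this is harmless since the bound $t - s_0 \to +\infty$ regardless. Monotonicity of $t \mapsto I_t$ and $t \mapsto J_t$ is not even needed for the conclusion, though it makes the convergence of the whole path (rather than a subsequence) immediate.
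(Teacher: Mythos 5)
Your proof is correct and takes essentially the same approach as the paper: both are elementary pathwise arguments that use the hypothesis to force $\hat{R}_s$ to be eventually (very) negative and then bound the integrals from below. The only cosmetic difference is that you bound the integrand by $1$ on $[s_0,t]$ to get $I_t \geq t-s_0$, whereas the paper bounds the integral over a unit-length interval on which $-\hat{R}_s \geq \ln(K+1)$; both yield the claim on a single full-measure event.
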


\begin{proof}
Since $\lim_{t \to \infty}\frac{\hat{R}_t}{t} = \mu < 0$ ({\bf P}-a.s.) implies that $\lim_{t \to \infty}\hat{R}_t = -\infty$ ({\bf P}-a.s.), we can show that $I_t = \int_0^t e^{-\hat{R}_s}ds$ and $J_t = \int_0^t e^{-2\hat{R}_s}ds$ diverge ({\bf P}-a.s.). In fact, denote by $\Omega_0$ a set of probability one such that $\lim_{t \to \infty}\hat{R}_t(\omega) = -\infty$, for each $\omega \in \Omega_0$, i.e. for each $C > 0$, there exists $t_0(\omega) \geq 0$, such that, for all $t \geq t_0(\omega)$, $-\hat{R}_t(\omega) \geq C$. Then, for each $\omega \in \Omega_0$ and for each $K > 0$, we have, taking $C = \ln(K + 1)$ and $\tau(\omega) = t_0(\omega) + 1$, that, for all $t \geq \tau(\omega)$,
$$\int_0^t e^{-\hat{R}_s(\omega)}ds \geq \int_{t_0(\omega)}^{\tau(\omega)} e^{-\hat{R}_s(\omega)}ds \geq e^c = K+1 \geq K.$$
The proof of the divergence for $J_t$ is similar.
\end{proof}

\begin{proof}[Proof of Proposition \ref{t3}]
Using Proposition \ref{l2}, we have, for all $y > 0$,
\begin{equation*}
\begin{split}
{\bf P}(\tau(y) < \infty) & = {\bf P}\left(\sup_{t \geq 0}(-a_X I_t - \sigma_X W_{J_t}) \geq y\right) \\
& \geq {\bf P}\left(\limsup_{t \to \infty}(-a_X I_t - \sigma_X W_{J_t}) \geq y\right).
\end{split}
\end{equation*}

When $\sigma_X = 0$, we have by assumption that $a_X < 0$, and therefore 
$${\bf P}\left(\limsup_{t \to \infty} (-a_X I_t - \sigma_X W_{J_t}) \geq y\right) = {\bf P}\left(\limsup_{t \to \infty} |a_X| I_t \geq y\right) = 1.$$

When $\sigma_X > 0$, since $W$ is a Brownian motion and $\lim_{t \to \infty} J_t = +\infty$, we have $\limsup_{t \to \infty}W_{J_t} = + \infty$ and thus
$${\bf P}\left(\limsup_{t \to \infty} (-a_X I_t - \sigma_X W_{J_t}) \geq y\right) = 1.$$
\end{proof}

Under some integrability conditions, we can prove a more explicit condition for ruin with probability one.

\begin{prop}\label{prop_ruin1}
Assume that $X_t = a_X t + \sigma_X W_t$, for all $t \geq 0$, with $a_X \leq 0$, $\sigma_X \geq 0$ and $a_X^2 + \sigma_X > 0$. Assume that 
\begin{enumerate}
\item[(i)] $\int_0^{\infty}(1+s)^{-2}d\langle R^c \rangle_s < \infty$,
\item[(ii)] there exists $p \in (1,2)$ such that
$$\int_0^{\infty}\int_{-1}^{\infty}\frac{\min(|\ln(1+x)|^2,|\ln(1+x)|^p)}{(1+s)^p} \nu_R(ds, dx) < \infty,$$
\item[(iii)]there exists $D<0$ such that $({\bf P}-a.s.)$,
$$\hspace{-6cm}D=\lim_{t \to \infty}\frac{1}{t}\left(B_t - \frac{1}{2}\langle R^c \rangle_t \right.$$
$$\hspace{3cm}\left. + \int_0^t\int_{-1}^{\infty}\left(\ln(1+x) - x \ind_{\{|\ln(1+x)| \leq 1\}}\right)\nu_R(ds, dx)\right)$$
where $B = (B_t)_{t \geq 0}$ is the drift part of $R$.
\end{enumerate}
Then, for all $y > 0$,
$${\bf P}(\tau(y) < \infty) = 1.$$ 
\end{prop}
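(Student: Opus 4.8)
The plan is to reduce Proposition~\ref{prop_ruin1} to Proposition~\ref{t3} by showing that conditions (i)--(iii) together imply $\lim_{t \to \infty}\frac{\hat{R}_t}{t} = D < 0$ $({\bf P}$-a.s.$)$. Recall from the beginning of Section~\ref{s0} that
$$\hat{R}_t = R_t - \frac{1}{2}\langle R^c\rangle_t + \sum_{0 < s \leq t}\left(\ln(1+\Delta R_s) - \Delta R_s\right),$$
and substituting the semimartingale decomposition \eqref{eq_semdeco} of $R$, the $\Delta R_s$ terms coming from the small-jump and large-jump integrals cancel against the sum, leaving
$$\hat{R}_t = B_t - \frac{1}{2}\langle R^c\rangle_t + R^c_t + \int_0^t\int_{-1}^{\infty}\left(\ln(1+x) - x\ind_{\{|\ln(1+x)| \leq 1\}}\right)(\mu_R(ds,dx) - \nu_R(ds,dx)) + \int_0^t\int_{-1}^{\infty}\left(\ln(1+x) - x\ind_{\{|\ln(1+x)|\leq 1\}}\right)\nu_R(ds,dx) - \text{(the analogous compensator-free $x$-terms)},$$
which, after collecting terms, can be written as the sum of three pieces: the deterministic/predictable drift $D_t := B_t - \frac12\langle R^c\rangle_t + \int_0^t\int_{-1}^\infty(\ln(1+x) - x\ind_{\{|\ln(1+x)|\leq1\}})\nu_R(ds,dx)$, the continuous martingale $R^c$, and a purely discontinuous martingale $M_t := \int_0^t\int_{-1}^\infty \big(\ln(1+x) - x\ind_{\{|\ln(1+x)|\leq1\}}\big)(\mu_R - \nu_R)(ds,dx)$. (One needs to check that $\ln(1+x) - x\ind_{\{|\ln(1+x)|\leq1\}}$ is integrable against $\nu_R$ and square-integrable near zero, which follows from (ii) since $\ln(1+x) - x = O(x^2) = O(|\ln(1+x)|^2)$ near $x=0$.)

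The next step is to show that $\frac{R^c_t}{t} \to 0$ and $\frac{M_t}{t} \to 0$ $({\bf P}$-a.s.$)$. For the continuous martingale part, I would use the strong law of large numbers for continuous local martingales (Dambis--Dubins--Schwarz / time-change to Brownian motion): since hypothesis (i) gives $\int_0^\infty (1+s)^{-2}d\langle R^c\rangle_s < \infty$, a Kronecker-type argument (or directly the LLN for martingales, e.g. via $\langle R^c\rangle_t/t \to 0$ or Lépingle's result) yields $R^c_t/t \to 0$. For the jump martingale $M$, I would use the strong law of large numbers for locally square-integrable martingales: its predictable quadratic variation is $\langle M\rangle_t = \int_0^t\int_{-1}^\infty(\ln(1+x) - x\ind_{\{|\ln(1+x)|\leq1\}})^2\nu_R(ds,dx)$, but since $p \in (1,2)$ this second moment may be infinite, so instead I would appeal to a strong law valid under a $p$-th moment condition — namely that $\int_0^\infty (1+s)^{-p}\,d(\text{the predictable $p$-variation bound})<\infty$ implies $M_t/t \to 0$; condition (ii) is tailored precisely so that $\int_0^\infty\int_{-1}^\infty \frac{\min(|\ln(1+x)|^2,|\ln(1+x)|^p)}{(1+s)^p}\nu_R(ds,dx) < \infty$, which by the standard truncation (split $M$ into its part with small jumps, handled via the $L^2$ piece, and its part with large jumps, handled via the $L^p$ piece) gives the almost-sure convergence of $M_t/t$ to $0$. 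Concretely I would cite/adapt a Lépingle-type SLLN for semimartingales (e.g. Théorème in Lépingle 1978, or Liptser's version in \cite{LSh}).

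Combining these with hypothesis (iii), which says exactly that $D_t/t \to D < 0$ $({\bf P}$-a.s.$)$, gives $\hat{R}_t/t = D_t/t + R^c_t/t + M_t/t \to D < 0$ $({\bf P}$-a.s.$)$. Then Proposition~\ref{t3} applies verbatim (its hypotheses on $X$ are identical to those assumed here), yielding ${\bf P}(\tau(y) < \infty) = 1$ for all $y > 0$.

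The main obstacle I expect is the almost-sure convergence $M_t/t \to 0$ for the jump martingale under only a $p$-th moment condition with $p < 2$: the classical martingale SLLN needs square-integrability, so one must invoke the right generalization (Marcinkiewicz--Zygmund / Lépingle-type laws of large numbers for martingales with $\sup$-truncation) and verify carefully that the decomposition of $M$ into a compensated-small-jump $L^2$-martingale plus a large-jump $L^p$-martingale, each controlled by one half of the $\min$ in condition (ii), does converge. The continuous-part estimate and the bookkeeping that the predictable drift in (iii) is exactly $D_t$ are comparatively routine, though one must be attentive to the cancellation of the $x\ind_{\{|\ln(1+x)|\leq1\}}$ truncation terms between the jump integral in \eqref{eq_semdeco} and the $\sum(\ln(1+\Delta R_s) - \Delta R_s)$ correction.
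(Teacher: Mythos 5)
Your proposal is correct and follows essentially the same route as the paper: decompose $\hat{R}$ into the predictable drift of hypothesis (iii), the continuous martingale $R^c$ (handled via (i)), and a purely discontinuous martingale split at $|\ln(1+x)|=1$ into an $L^2$ small-jump piece and an $L^p$ large-jump piece matching the two halves of the $\min$ in (ii), all controlled by the Liptser--Shiryaev/L\'epingle strong law (the paper invokes Theorem 9, p.~142--143 of \cite{LSh}, exactly the reference you name), before concluding with Proposition~\ref{t3}. The only quibble is bookkeeping: the compensated jump martingale is $\int_0^t\int_{-1}^{\infty}\ln(1+x)\,(\mu_R-\nu_R)(ds,dx)$ rather than your $\int_0^t\int_{-1}^{\infty}\bigl(\ln(1+x)-x\ind_{\{|\ln(1+x)|\leq 1\}}\bigr)(\mu_R-\nu_R)(ds,dx)$, but your splitting and moment controls apply verbatim to the correct expression, so this does not affect the argument.
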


\begin{proof}
We are going to show that $\lim_{t \to \infty}\frac{\hat{R}_t}{t} = D$. Since, $s \mapsto (1+s)^{-p}$ is a continuous function, for each $t > 0$, we have $(1+s)^{-p} \geq d_t$ for some constant $d_t > 0$ and for all $s \in [0, t]$. Thus, we have, for all $t \geq 0$,
\begin{equation*}
\begin{split}
\int_0^t \int_{-1}^{\infty}&|\ln(1+x)| \ind_{\{|\ln(1+x)| > 1\}} \nu_R(ds, dx) \\
& \leq \frac{1}{d_t}\int_0^t\int_{-1}^{\infty}\frac{|\ln(1+x)|^p}{(1+s)^p} \ind_{\{|\ln(1+x)| > 1\}} \nu_R(ds, dx)< \infty.
\end{split}
\end{equation*}
Thus, using the semimartingale decomposition of $R$ for the truncation function $h(x) = \ind_{\{|\ln(1+x)| > 1\}}$ and Proposition II.1.28 p.72 in \cite{JSh}, we obtain
\begin{equation*}
\begin{split}
\hat{R}_t = B_t & - \frac{1}{2} \langle R^c \rangle_t + \int_0^t \int_{-1}^{\infty}\left(\ln(1+x) - x\ind_{\{|\ln(1+x)| \leq 1\}}\right)\nu_R(ds, dx) \\
& + R^c_t + \int_0^t\int_{-1}^{\infty} \ln(1+x)\ind_{\{|\ln(1+x)| \leq 1\}} (\mu_R(ds, dx) - \nu_R(ds, dx)) \\
& + \int_0^t\int_{-1}^{\infty} \ln(1+x)\ind_{\{|\ln(1+x)| > 1\}} (\mu_R(ds, dx) - \nu)_R(ds, dx)).
\end{split}
\end{equation*}

Denoting by $H'_t$ and $H''_t$ the last two terms of the r.h.s. of the equation above, we show that $\lim_{t \to \infty}\frac{R^c_t}{t} = 0$, $\lim_{t \to \infty}\frac{H'_t}{t} = 0$, and $\lim_{t \to \infty}\frac{H''_t}{t} = 0$ $({\bf P}-a.s.)$. 

For $H'$ and $H''$ we apply Theorem 9 p.142-143 in \cite{LSh}. Since $H'$ is purely discontinuous, this theorem tells us that $\lim_{t \to \infty}\frac{H'_t}{t} = 0$ $({\bf P}-a.s.)$, if $\tilde{Q}_{\infty} < +\infty$, where $\tilde{Q}$ is the compensator of the process $Q = (Q_t)_{t \geq 0}$ given by
$$Q_t = \sum_{0 < s \leq t} \frac{(\Delta H'_s / (1+s))^2}{1 + |\Delta H'_s / (1+s)|}.$$
The same holds for $H''$ when we replace $\Delta H'_t$ by $\Delta H''_t$.

Since $\Delta H'_t = \ln(1 + \Delta R_t) \ind_{\{|\ln(1 + \Delta R_t)| \leq 1\}}$ and $p \leq 2$, we have
\begin{equation*}
\begin{split}
\tilde{Q}_{\infty} & = \int_0^{\infty}\int_{-1}^{\infty}\frac{(\ln(1+x) / (1+s))^2}{1 + |\ln(1+x) / (1+s)|}\ind_{\{|\ln(1 + x)| \leq  1\}}\nu_R(ds,dx) \\
& \leq \int_0^{\infty}\int_{-1}^{\infty}\frac{\ln(1+x)^2}{(1+s)^2}\ind_{\{|\ln(1 + x)| \leq  1\}}\nu_R(ds,dx) \\
& \leq \int_0^{\infty}\int_{-1}^{\infty}\frac{\ln(1+x)^2}{(1+s)^p}\ind_{\{|\ln(1 + x)| \leq  1\}} \nu_R(ds, dx) < \infty.
\end{split}
\end{equation*}

Then, note that by Young's inequality, $ab \leq \frac{a^n}{n} + \frac{b^m}{m}$, for $a,b > 0$, with $n = \frac{1}{p - 1}$ and $m$ given by $\frac{1}{n} + \frac{1}{m} = 1$, we obtain for all $s \geq 0$ and $x \in \mathbb{R}$,
$$\frac{1}{(1+s)+|\ln(1+x)|} \leq \frac{1}{n^{1/n}m^{1/m}(1+s)^{p-1}|\ln(1+x)|^{2-p}}.$$
Denoting $K = \frac{1}{n^{1/n}m^{1/m}}$ and since $\Delta H''_t =\ln(1 + \Delta R_t) \ind_{\{|\ln(1 + \Delta R_t)| > 1\}}$, we have
\begin{equation*}
\begin{split}
\tilde{Q}_{\infty} & = \int_0^{\infty}\int_{-1}^{\infty}\frac{(\ln(1+x) / (1+s))^2}{1 + |\ln(1+x) / (1+s)|}\ind_{\{|\ln(1 + x)| > 1\}}\nu_R(ds,dx) \\
& = \int_0^{\infty}\int_{-1}^{\infty}\frac{\ln(1+x)^2}{(1+s)(1 + s + |\ln(1+x)|)}\ind_{\{|\ln(1 + x)| > 1\}}\nu_R(ds,dx) \\
& \leq K \int_0^{\infty}\int_{-1}^{\infty}\frac{|\ln(1+x)|^p}{(1+s)^p}\ind_{\{|\ln(1 + x)| > 1\}}\nu_R(ds,dx) < +\infty.
\end{split}
\end{equation*}

Finally, to show that $\lim_{t \to \infty}\frac{R^c_t}{t} = 0$ $({\bf P}-a.s.)$, we apply Theorem 9 p.142-143 in \cite{LSh} again. Since $R^c$ is continuous, the theorem tells us that it is enough that $\int_0^{\infty}(1+s)^{-2}d \langle R^c \rangle_s < \infty$. But, this holds by assumption. 

So $\lim_{t \to \infty}\frac{\hat{R}_t}{t} = D$ and by Theorem \ref{t3}, if $D < 0$, we obtain for all $y > 0$ that $${\bf P}(\tau(y) < \infty) = 1.$$ 
\end{proof}

In the case when $R$ is a L\'{e}vy process, the assumptions in the proposition above simplify considerably and correspond to the conditions in \cite{P2} (under slightly different integrability assumptions).

\begin{corr}\label{coro_levyruin}
Suppose that $R$ is a L\'{e}vy process with triplet $(a_R, \sigma_R^2, \nu_R)$. Assume that $X_t = a_X t + \sigma_X W_t$, for all $t \geq 0$, with $a_X \leq 0$, $\sigma_X \geq 0$ and $a_X^2 + \sigma_X > 0$. Assume that there exists $p \in (1,2)$ such that
$$\int_{-1}^{\infty}|\ln(1+x)|^p \ind_{\{|\ln(1+x)| > 1\}} \nu_R(dx) < \infty.$$
In addition, assume that
$$a_R - \frac{1}{2}\sigma_R^2 + \int_{-1}^{\infty} \left(\ln(1+x) - x \ind_{\{|\ln(1+x)| \leq 1\}}\right)\nu_R(dx) < 0.$$
Then, for all $y > 0$,
$${\bf P}(\tau(y) < \infty) = 1.$$
\end{corr}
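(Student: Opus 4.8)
The plan is to deduce Corollary \ref{coro_levyruin} from Proposition \ref{prop_ruin1} by checking that, when $R$ is a L\'evy process, the three hypotheses of that proposition reduce to the two displayed assumptions of the corollary. The key observation is that for a L\'evy process the compensator of the jump measure factorizes as $\nu_R(ds,dx) = ds\,\nu_R(dx)$ and the continuous martingale part satisfies $\langle R^c\rangle_t = \sigma_R^2 t$, while the drift part is $B_t = a_R t$.

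First I would verify hypothesis (i). Since $\langle R^c\rangle_s = \sigma_R^2 s$, we have $d\langle R^c\rangle_s = \sigma_R^2\,ds$, so
$$\int_0^\infty (1+s)^{-2}\,d\langle R^c\rangle_s = \sigma_R^2\int_0^\infty (1+s)^{-2}\,ds = \sigma_R^2 < \infty,$$
which holds trivially. Next, for hypothesis (ii), using the factorization of $\nu_R$ and the bound $\int_0^\infty (1+s)^{-p}\,ds = \frac{1}{p-1} < \infty$ for $p > 1$, the double integral splits as a product: the contribution from $\{|\ln(1+x)|\le 1\}$ involves $\int_{-1}^\infty |\ln(1+x)|^2\ind_{\{|\ln(1+x)|\le 1\}}\nu_R(dx)$, which is finite because near $x=0$ one has $|\ln(1+x)|\sim|x|$ and $\int_{|x|\le c}x^2\,\nu_R(dx)<\infty$ by definition of a L\'evy measure (one has to be slightly careful that $|\ln(1+x)|\le 1$ corresponds to $x$ in a bounded neighbourhood of $0$, say $x\in[e^{-1}-1, e-1]$, so this is a tail-of-zero estimate); the contribution from $\{|\ln(1+x)|>1\}$ reduces to $\int_{-1}^\infty |\ln(1+x)|^p\ind_{\{|\ln(1+x)|>1\}}\nu_R(dx)$, which is finite precisely by the first displayed assumption of the corollary. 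Hence (ii) holds with the same $p$.

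For hypothesis (iii), I would use that all the processes inside the limit are now deterministic linear functions of $t$: with $B_t = a_R t$, $\langle R^c\rangle_t = \sigma_R^2 t$, and $\nu_R(ds,dx) = ds\,\nu_R(dx)$, the bracketed expression equals
$$t\left(a_R - \frac{1}{2}\sigma_R^2 + \int_{-1}^\infty\left(\ln(1+x) - x\ind_{\{|\ln(1+x)|\le 1\}}\right)\nu_R(dx)\right),$$
so dividing by $t$ gives exactly the constant $D = a_R - \frac12\sigma_R^2 + \int_{-1}^\infty(\ln(1+x)-x\ind_{\{|\ln(1+x)|\le 1\}})\nu_R(dx)$ (the $\omega$-a.s.\ limit being this constant trivially), and the second displayed assumption of the corollary says exactly $D < 0$. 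One should check that the integral defining $D$ is absolutely convergent: near zero $|\ln(1+x) - x| = O(x^2)$ is $\nu_R$-integrable, and on $\{|\ln(1+x)|>1\}$ integrability of $\ln(1+x)$ follows from the assumed finiteness of the $|\ln(1+x)|^p$ integral there (since $|\ln(1+x)|\le|\ln(1+x)|^p$ when $|\ln(1+x)|>1$ and $p>1$). With all three hypotheses verified and $D<0$, Proposition \ref{prop_ruin1} applies and yields ${\bf P}(\tau(y) < \infty) = 1$ for all $y > 0$.

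The main obstacle is not conceptual but bookkeeping: one must track the truncation region $\{|\ln(1+x)|\le 1\}$ versus $\{|x|\le 1\}$ carefully when invoking the defining property $\int \min(x^2,1)\,\nu_R(dx) < \infty$ of the L\'evy measure, since the two regions differ, and one must confirm that the various integrals appearing in $D$ and in hypothesis (ii) are genuinely finite (splitting each into its near-zero part, controlled by the $x^2$-integrability, and its tail part, controlled by the explicit assumption). Everything else is a direct substitution of the L\'evy characteristics into the general statement.
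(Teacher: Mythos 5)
Your proposal is correct and follows essentially the same route as the paper: both deduce the corollary from Proposition \ref{prop_ruin1} by substituting the L\'evy characteristics $B_t = a_R t$, $\langle R^c\rangle_t = \sigma_R^2 t$, $\nu_R(ds,dx) = \nu_R(dx)\,ds$, so that (i) and (iii) are immediate and only (ii) needs a small argument. The only cosmetic difference is in checking the $\{|\ln(1+x)|\le 1\}$ part of (ii): the paper rewrites that integral as $\int x^2\ind_{\{|x|\le 1\}}\nu_{\hat{R}}(dx)$ using the jump correspondence $\Delta\hat{R} = \ln(1+\Delta R)$, whereas you estimate it directly on $\nu_R$ via $|\ln(1+x)|\sim |x|$ near zero and the finiteness of $\nu_R$ away from zero — both are fine.
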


\begin{proof}
Since $R$ is a L\'{e}vy process, its semimartingale characteristics are given by $B_t = a_R t$, $\langle R^c \rangle_t = \sigma_R^2 t$ and $\nu_R(ds, dx) = \nu_R(dx)ds$ (see e.g. Corollary II.4.19, p.107, in \cite{JSh}). Note that since $R$ is a L\'{e}vy process, $\hat{R}$ is also a L\'{e}vy process, and
\begin{equation*}
\begin{split}
\int_{-1}^{\infty}|\ln(1+x)|^2 & \ind_{\{|\ln(1+x)| \leq 1\}}\nu_R(dx) \\
& = \int_0^1\int_{-1}^{\infty}|\ln(1+x)|^2\ind_{\{|\ln(1+x) \leq 1|\}} \nu_R(dx)ds \\
& = \E\left(\sum_{0 < s \leq 1}|\ln(1+\Delta R_s)|^2 \ind_{\{|\ln(1+\Delta R_s)| \leq 1\}}\right) \\
& = \E\left(\sum_{0 < s \leq 1}(\Delta \hat{R}_s)^2\ind_{\{|\Delta \hat{R}_s| \leq 1\}}\right) \\
& = \int_0^1\int_{\mathbb{R}}x^2\ind_{\{|x| \leq 1\}} \nu_{\hat{R}}(dx)ds < \infty,
\end{split}
\end{equation*}
so (ii) of Proposition \ref{prop_ruin1} holds. The conditions (i) and (iii) follow directly from the assumptions.
\end{proof}

\end{section}

\section*{Acknowledgements}
The authors would like to acknowledge financial support from the D\'{e}fiMaths project of the "F\'{e}d\'{e}ration de Recherche Math\'{e}matique des Pays de Loire" and from the PANORisk project of the "R\'{e}gion Pays de la Loire". We would also like to thank the French governement's "Investissement d'Avenir" program ANR-11-LABX-0020-01 for its stimulating mathematical research programs.

\end{document}